\def\volno{27}\fi
\def\volyear{2020}\fi
\def\papno{P00}\fi
\def\papid{abcd}\fi
\newcommand{\MSC}[1]{\def\the@MSC{\ignorespaces#1}}
\renewenvironment{abstract}{%
  \small
      \begin{center}%
	{\bfseries \abstractname\vspace{-.5em}\vspace{\z@}}%
      \end{center}%
      \quotation}%
  {\par\smallskip\noindent
   \textbf{Mathematics Subject Classifications: }\the@MSC\endquotation}
\renewcommand\title[1]{\gdef\@title{\reset@font\Large\bfseries #1}}
\renewcommand\section{\@startsection {section}{1}{\z@}%
                                   {-3.5ex \@plus -1ex \@minus -.2ex}%
                                   {2.3ex \@plus.2ex}%
                                   {\normalfont\large\bfseries}}
\renewcommand\subsection{\@startsection{subsection}{2}{\z@}%
                                     {-3ex\@plus -1ex \@minus -.2ex}%
                                     {1.5ex \@plus .2ex}%
                                     {\normalfont\normalsize\bfseries}}
\renewcommand\subsubsection{\@startsection{subsubsection}{3}{\z@}%
                                     {-2.5ex\@plus -1ex \@minus -.2ex}%
                                     {1.5ex \@plus .2ex}%
                                     {\normalfont\normalsize\bfseries}}
\renewcommand\paragraph{\@startsection{paragraph}{4}{\z@}%
                                     {2ex \@plus.5ex \@minus.2ex}%
                                     {-1em}%
                                     {\normalfont\normalsize\bfseries}}
\renewcommand\subparagraph{\@startsection{subparagraph}{5}{\parindent}%
                                     {2ex \@plus.5ex \@minus .2ex}%
                                     {-1em}%
                                     {\normalfont\normalsize\bfseries}}
\renewcommand{\ps@plain}{%
\renewcommand{\@oddfoot}{\hfil\thepage}}
\g@addto@macro\bfseries{\boldmath}
  \newlength{\BiblioSpacing}
  \renewenvironment{thebibliography}[1]{%
    \begin{oldthebibliography}{#1}%
      \setlength{\parskip}{\BiblioSpacing}
      \setlength{\itemsep}{\BiblioSpacing}
  }%
  {%
    \end{oldthebibliography}%
  }
\theoremstyle{plain}
\newtheorem{theorem}{Theorem}
\newtheorem{proposition}[theorem]{Proposition}
\theoremstyle{definition}
\newtheorem{definition}[theorem]{Definition}
\newtheorem{example}[theorem]{Example}
\theoremstyle{remark}
\newtheorem{remark}[theorem]{Remark}
\title{$m$-submultisets and $m$-permutations\\ of multisets elements}
\author{Oleksandr Makhnei \qquad  Roman Zatorskii\\
\small Faculty of Mathematics and Computer Science\\[-0.8ex]
\small Vasyl Stefanyk Precarpathian National University\\[-0.8ex]
\small Ivano-Frankivsk, Ukraine\\
\small\tt oleksandr.makhnei@pnu.edu.ua\qquad roman.zatorskii@pnu.edu.ua}
\begin{document}

\maketitle

\begin{abstract}
The article contains some important classes of multisets.
Combinatorial proofs of problems on the number of
$m$-submultisets and $m$-permutations of multiset elements are
considered and effective algorithms for their calculation are
given. In particular, the Pascal triangle is generalized in the
case of multisets.
\end{abstract}

\section{Introduction}

The first spontaneous combinatorial studies of permutations of
multisets, apparently, begin with the studies of the Indian
mathematician Bh\={a}skara II (1150). The polynomial formula
for the number of all permutations of an arbitrary multiset was
considered by Jean Prestet in the paper \cite{Prestet}.

In discrete mathematics, problems of investigating sets of
objects with identical objects often arise. Therefore, from the
middle of the last century the concept of multiset (see
\cite{Stanley1, Stanley2}) begins to gain more and more weight.
Since the multiset is a natural generalization of the set, the
problems of generalization of the classical results of
combinatorics of finite sets naturally arise. Thus, in the
paper \cite{Gr-Kn}, Green and Kleitman, in fact, consider the
problem of calculating the number of $m$-submultisets of a
multiset. However, in the general case, few problems are
solved. As a rule, authors are limited to considering only some
partial but very important classes of multisets.

In the papers \cite{Cartier_Foata,Foata}, Dominique Foata
introduced the concept of ``joining product''
$\alpha\top\beta$, which extended a number of known results
concerning ordinary permutations of sets to the case of
multisets. In the book \cite{Knuth3}, Donald Knuth develops
combinatorial techniques for multisets. Using the theorem that
each permutation of a multiset can be written as
$$\sigma_1\top\sigma_2\top\ldots\top\sigma_t,\; t\geqslant 0,$$
where $\sigma_j$ are cycles such that their elements are not
repeated, Knuth gives examples of enumeration of permutations
of multisets with some restrictions.

The paper \cite{Savage_Wilf} is very useful from an applied
point of view.

The so-called nondecreasing series in the permutations of
multisets (see \cite{David_Barton,Mallows}) have important
applications in the study of ``order statistics''. In the case
of a constant multiset $\{1^p,2^p,\ldots,m^p\}$ in his paper
\cite[212--213]{MacMahon}, Percy MacMahon showed such that the
number of permutations with $k+1$ series is equal to the number
of permutations with $mp-p-k+1$ series. Also, by the generatrix
method, MacMahon proved such that the number of permutations of
the multiset $1^{n_1,},2^{n_2},\ldots,m^{n_m}$ with $k$ series
is equal to
$$\sum_{j=0}^k(-1)^j\binom{n+1}{j}\binom{n_1-1+k-j}{n_1}\binom{n_2-1+k-j}{n_2}\cdot\ldots\cdot\binom{n_m-1+k-j}{n_m}\!,$$
where $n=n_1+n_2+\ldots+n_m.$

An interesting approach for enumerating submultisets of
multisets is proposed in \cite{Hage}.

Sometimes a continual apparatus is used to solve discrete
mathematics problems. For instance, in \cite{Goc1}, using the
generatrix method, Goculenko proved an integral formula for
calculating the number of $m$-submultisets of the given
multiset
$$|C^m(A)|=\frac{1}{2\pi}\int_{-\pi}^{\pi}exp(-im\varphi)\prod_{j=1}^n\frac{exp\{i(k_j+1)\varphi\}-1}{exp\{i\varphi\}-1},$$
where $i=\sqrt{-1}.$ In \cite{Goc1} the problem for
$m$-submultisets of a multiset is also somewhat generalized.

This paper contains some important classes of multisets.
Combinatorial proofs of problems on the number of
$m$-submultisets and $m$-permutations of multiset elements are
considered and effective algorithms for their calculation are
given. In particular, the Pascal triangle is generalized in the
case of multisets.

\section{Auxiliary concepts}

The multiset $A$ means an arbitrary disordered set of elements
of some set $[A]$, which we call the base of this multiset.
Therefore, an arbitrary multiset can be written in the
canonical form
 \begin{equation}\label{multy}A=\{a_1^{k_1},a_2^{k_2},\ldots,a_n^{k_n}\},
 \end{equation}
where $[A]=\{a_1,a_2,\ldots,a_n\}$ and indices $k_i$ of
elements $a_i$ indicate the multiplicity of occurrence of the
element $a_i$ to the multiset $A.$ We can assume without loss
of generality that $k_1\geqslant k_2\geqslant \ldots \geqslant
k_n.$

The multiset $A^{'}=\{k_1,k_2,\ldots,k_n\}$ of indices of
multiset (\ref{multy}) is called its primary specification.
Suppose the primary specification $A^{'}$ of multiset
(\ref{multy}) is represented in the canonical form
$$A^{'}=\{1^{\lambda_1},2^{\lambda_2},\ldots,r^{\lambda_r}\},$$
where
\begin{equation}\label{r=max_k}
r=\max(k_1,k_2,\ldots,k_n);
\end{equation}
then the multiset of its indices
$A^{''}=\{\lambda_1,\lambda_2,\ldots,\lambda_r\}$ is called the
secondary specification of the multiset $A.$

If $i$ does not belong to the multiset $A^{'},$ then we assume
that $\lambda_i=0.$ Note that for the secondary specification
of multiset (\ref{multy}) we have the equality
\begin{equation}\label{l_1+2l_2+...=|A|}
|A|=\lambda_1+2\lambda_2+\ldots+r\lambda_r.
\end{equation}

Multisets $A=\{a_1^{k_1},a_2^{k_2},\ldots,a_n^{k_n}\}$ and
$\overline{A}=\{a_1^{\overline{k_1}},a_2^{\overline{k_2}},\ldots,a_r^{\overline{k_r}}\}$
are called the adjoint multisets if
\begin{equation}\label{-k=k}
\overline{k_i}=\left|\{k_j:k_j\geqslant i\}\right|=\left|\{j:k_j\geqslant i\}\right|,\; i=1,2,\ldots,r,\;
j=1,2,\ldots,n.
\end{equation}
Here $r$ is given by equality (\ref{r=max_k}).

Let us remark that $\overline{k_i}$ has a certain combinatorial
meaning. Namely $\overline{k_i}$ is the maximum number of
groups of $i$ identical elements that can be chosen from
multiset (\ref{multy}).

If the equality $\overline{A}=A$ holds true, then the multiset
$A$ is called the multiset with a self-adjoint primary
specification or the self-adjoint multiset.

If $A$ and $\overline{A}$ are the adjoint multisets and
$$A^{'}=\{k_1,\ldots,k_n\},\;
A^{''}=\{\lambda_1,\ldots,\lambda_r\},\;
\overline{A}^{'}=\{\overline{k_1},\ldots,\overline{k_r}\},
$$
$$(\overline{A})^{''}=\{\overline{\lambda_1},\ldots,\overline{\lambda_n}\}, \,\,r=\max(k_1,k_2,\ldots,k_n),$$
then between the elements of their specifications, in addition
to relationship (\ref{-k=k}), you can give 11 next
relationships. 

 \begin{equation}\label{k=lambda}k_i=\left|\{\lambda_j+\ldots+\lambda_r:\lambda_j+\ldots+\lambda_r\geqslant i\}\right|,\; i=1,\ldots,n,\; j=1,\ldots,r,
 \end{equation}
 \begin{equation}\label{-lambda=-k}\overline{\lambda_i}=\left|\{j:\overline{k_j}=i\}\right|,\; i=1,\ldots,n,\; j=1,\ldots,r,
 \end{equation}
 \begin{equation}\label{-k=-lambda}
 \overline{k_i}=\left|\{\overline{\lambda_j}+\ldots+\overline{\lambda_r}:\overline{\lambda_j}+\ldots+
 \overline{\lambda_r}\geqslant i\}\right|,\; i=1,\ldots,r,\; j=1,\ldots,n,
 \end{equation}
\begin{equation}\label{lambda=k}\lambda_i=\left|\{j:k_j=i\}\right|,\; i=1,\ldots,r,\; j=1,\ldots,n,
 \end{equation}
 \begin{equation}\label{k=-k}k_i=\left|\{\overline{k_j}:\overline{k_j}\geqslant i\}\right|,\; i=1,\ldots,n,\; j=1,\ldots,r,
 \end{equation}
\begin{equation}\label{-lambda=lambda}\overline{\lambda_i}=\left|\{j:\lambda_j+\ldots+\lambda_r=i\}\right|,\; i=1,\ldots,n,\; j=1,\ldots,r,
 \end{equation}
 \begin{equation}\label{lambda=-lambda}\lambda_i=\left|\{j:\overline{\lambda_j}+\ldots+\overline{\lambda_r}=i\}\right|,\; i=1,\ldots,r,\; j=1,\ldots,n,
 \end{equation}
\begin{equation}\label{-lambda=k}M\cdot \overline{\lambda}=k,
 \end{equation}
 \begin{equation}\label{k=-lambda}M^{-1}\cdot k=\overline{\lambda},
 \end{equation}
\begin{equation}\label{lambda=-k}M\cdot \lambda=\overline{k},
 \end{equation}
\begin{equation}\label{-k=lambda}M^{-1}\cdot
\overline{k}=\lambda.
 \end{equation}

In equalities (\ref{-lambda=k}) and (\ref{k=-lambda}) $k$ and
$\overline{\lambda}$ are $n$-dimensional column vectors such
that their coordinates coincide with the elements of the
specifications $k(A)$ and $k^2(\overline{A})$ accordingly. In
equalities (\ref{-lambda=k}) and (\ref{k=-lambda}) $M$ and
$M^{-1}$ are square matrices of order $n$ of the next form
$$M=\left(
  \begin{array}{ccccc}
    1 & 1 & \cdots & 1 & 1 \\
    0 & 1 & \cdots & 1 & 1 \\
    \vdots & \cdots & \cdots & \cdots & \vdots \\
    0 & 0& \cdots & 1 & 1 \\
    0 & 0 & \cdots & 0 & 1 \\
  \end{array}
\right)\!,\quad
M^{-1}=\left(
  \begin{array}{ccccc}
    1 & -1 & \cdots & 0 & 0 \\
    0 & 1 & \cdots & 0 & 0 \\
    \vdots & \cdots & \cdots & \cdots & \vdots \\
    0 & 0& \cdots & 1 & -1 \\
    0 & 0 & \cdots & 0 & 1 \\
  \end{array}
\right)\!.$$

In equalities (\ref{lambda=-k}) and (\ref{-k=lambda}) $\lambda$
and $\overline{k}$ are similar $r$-dimensional column vectors,
$M$ and $M^{-1}$ are similar matrices of order $r$.
     		
\begin{remark}Since $k(\overline{\overline{A}})=k(A)$, it follows
that formulas (\ref{-k=k}), (\ref{k=lambda}),
(\ref{-lambda=-k}), (\ref{-lambda=lambda}), (\ref{-lambda=k}),
(\ref{k=-lambda}) are analogous to formulas (\ref{k=-k}),
(\ref{-k=-lambda}), (\ref{lambda=k}), (\ref{lambda=-lambda}),
(\ref{lambda=-k}), (\ref{-k=lambda}) correspondingly. In fact,
formulas (\ref{-lambda=k}), (\ref{k=-lambda}) establish the
one-to-one correspondence between the sets of solutions of
equation (\ref{l_1+2l_2+...=|A|}) and the equation
$|A|=\overline{\lambda_1}+2\overline{\lambda_2}+\ldots+n\overline{\lambda_n}$,
which is analogous to equation (\ref{l_1+2l_2+...=|A|}). A
similar conclusion can be made for formulas (\ref{lambda=-k}),
(\ref{-k=lambda}).
\end{remark}

Finally, we give a well-known statement about a cardinality of
multiboolean of multiset (\ref{multy}).

\begin{proposition}\label{|A^k|}If
$$A=\{a_1^{k_1},a_2^{k_2},\ldots,a_n^{k_n}\}$$ and $C(A)$ is a set of submultisets of the multiset
$A$, then
\begin{equation}\label{C(A)}\left|C(A)\right|=\prod_{i=1}^n(k_i+1).\end{equation}
\end{proposition}

\section{Some classes of multisets and their specifications}

1. 	\emph{The multiset with a positive integer function of a
natural argument} is the multiset of the form
 \begin{equation}\label{g:N-N}A=\{a_1^{g(1)},a_2^{g(2)},\ldots,a_n^{g(n)}\},
 \end{equation}
where $g:N\rightarrow N$ is some nondecreasing function that
satisfies the inequality $g(i)\geqslant i$ for all $i\in N$.

2. \emph{The multiset with a continuous function} $f$ is the
multiset of the form
\begin{equation}\label{f:D-E}A=\{[f(1)],[f(2)],\ldots,[f(n)]\},
 \end{equation}
where $f$ is some continuous increasing function
$$f:D\rightarrow E,\; D=[1,n],\; E\supseteq [1,[f(n)]]$$ that
satisfies the inequality $f(x)\geqslant x,$ $[\:]$ is an
integer part of the number. Specification (\ref{f:D-E}) is a
partial case of multiset (\ref{g:N-N}).

For example, for the function $f=\exp(x)$ and $n = 5$ the first
derivative of the multiset has the form
$$A'=\{2,7,20,54,148\}.$$

3. \emph{The linear multiset} is the multiset that has the form
\begin{equation}\label{specyf_lin}A=\{a_1^{p+q},a_2^{p+2q},\ldots,a_n^{p+nq}\},
 \end{equation}
where $p\in N_0$, $q\in Z$ and $1\leqslant p+q.$

4. \emph{The constant multiset} is the multiset that has the
form
\begin{equation}\label{specyf_const}A=\{a_1^{q},a_2^{q},\ldots,a_n^{q}\},
 \end{equation}
where $q\geqslant 1.$

5. \emph{The multiset with repetitions without restrictions} is
the specification that has the form
\begin{equation}\label{specyf_infty}A=\{a_1^{\infty},a_2^{\infty},\ldots,a_n^{\infty}\}.
 \end{equation}

Finally, we give an example of another class of multisets such
that a number of $m$-submultisets is calculated relatively
simply.
\begin{equation}\label{specyf_step}A=\{a_1^{2^{k_1}-1},a_2^{2^{k_2}-1},\ldots,a_n^{2^{k_n}-1}\},\quad k_1\leqslant k_2\leqslant\ldots\leqslant k_n.
 \end{equation}

\section{Number of $m$-submultisets of a multiset}

\begin{definition}\label{def.Cm(A)}The set
\begin{equation}\label{Cm(A)}C_m(A)=\{B\subseteq A:|B|=m\}
\end{equation}
of all $m$-submultisets of the multiset
$A=\{a_1^{k_1},\ldots,a_n^{k_n}\}$ is called the set of
$m$-combinations of elements of this multiset.
\end{definition}

To denote the cardinality of set (\ref{Cm(A)}) we use the
notation
\begin{equation}\label{|Cm(A)|}|C_m(A)|=\binom{k_1k_2\ldots
k_n}{m},
\end{equation}
which was proposed in the \cite{Grin-Kleit}.

For some specifications of the multiset $A$ the cardinality of
the set has been considered formerly. In particular, for
$n$-element sets the classical formula
\begin{equation}\label{C_n^m}\binom{\underbrace{11\ldots
1}_{n}}{m}=\frac{n!}{m!(n-m)!}
\end{equation}
is known.

For a multiset with repetitions without restrictions it is
known such that the formula
\begin{equation}\label{infty,infty}\binom{\underbrace{\infty\,\infty\,\ldots
\,\infty}_{n}}{m}=\frac{(n+m-1)!}{m!(n-1)!}
\end{equation}
is valid.

\begin{theorem}\label{theo.C_m(A)}
{\sloppy The number of $m$-submultisets ($m$-combinations) of
the multiset $A=\{a_1^{k_1},\ldots,a_n^{k_n}\}$ is equal to
\begin{equation}\label{eq.C_m(A)}
\binom{k_1\,k_2\,\ldots\,k_n}{m}= |C_m(A)|=\sum_{\lambda\in \Lambda_m(A)}\prod_{j=1}^s\binom{\overline{k_j}-\sum_{i=j+1}^s\lambda_i}{\lambda_j},
\end{equation}
where $\Lambda_m(A)$ is the set of those solutions of the
equation
\begin{equation}\label{sumilambda=m}\sum_{i=1}^si\lambda_i=m
\end{equation}
that satisfy the inequalities
\begin{equation}\label{sumilambda<kj}\sum_{i=j}^s\lambda_i\leqslant \overline{k_j},\; j=1,\ldots,s,
\end{equation}
where
$$s=\min(m,r),\;\, r=\max\{k_i\},\;\, i=1,\ldots,n,$$
$\overline{k_j}$ is the $j$th element of specification
(\ref{-k=k}), which is adjoint to the primary specification of
the multiset $A.$}
\end{theorem}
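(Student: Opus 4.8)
The plan is to organize the $m$-submultisets of $A$ according to the pattern of multiplicities they use, and then count each class by a greedy ``top-down'' assignment. An $m$-submultiset $B\subseteq A$ is the same datum as a vector of multiplicities $(b_1,\ldots,b_n)$ with $0\leq b_\ell\leq k_\ell$ and $b_1+\cdots+b_n=m$; this correspondence is exactly the one behind Proposition~\ref{|A^k|}, intersected with the condition $|B|=m$. To each such $B$ I would attach its \emph{type} $\lambda=(\lambda_1,\ldots,\lambda_s)$, where $\lambda_i=|\{\ell:b_\ell=i\}|$ counts the elements of $A$ occurring exactly $i$ times in $B$ (positive occurrences only), and $s=\min(m,r)$ since no multiplicity can exceed either $r=\max k_i$ or the total $m$.

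First I would check that every type arising this way lies in $\Lambda_m(A)$. The identity $\sum_{i=1}^s i\lambda_i=\sum_\ell b_\ell=m$ is immediate, giving (\ref{sumilambda=m}). For the inequalities (\ref{sumilambda<kj}), observe that $\sum_{i=j}^s\lambda_i$ counts the elements $a_\ell$ with $b_\ell\geq j$; any such element satisfies $k_\ell\geq b_\ell\geq j$, and by the definition (\ref{-k=k}) of the adjoint specification there are exactly $\overline{k_j}$ indices with $k_\ell\geq j$, so $\sum_{i=j}^s\lambda_i\leq\overline{k_j}$.

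The core of the argument is to show that the number of $m$-submultisets of a fixed type $\lambda\in\Lambda_m(A)$ equals $\prod_{j=1}^s\binom{\overline{k_j}-\sum_{i=j+1}^s\lambda_i}{\lambda_j}$. Writing $K_j=\{\ell:k_\ell\geq j\}$, so that $|K_j|=\overline{k_j}$ and $K_s\subseteq\cdots\subseteq K_1$, I would identify a type-$\lambda$ submultiset with the nested chain $S_s\subseteq S_{s-1}\subseteq\cdots\subseteq S_1$, where $S_j=\{\ell:b_\ell\geq j\}$; here $S_j\subseteq K_j$, $|S_j|=\sum_{i=j}^s\lambda_i$, and the multiplicity of $\ell$ is recovered as $\max\{j:\ell\in S_j\}$, so this is a bijection. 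I would then build such chains from the top: $S_s$ is an arbitrary $\lambda_s$-subset of $K_s$, and having fixed $S_{j+1}$ one enlarges it to $S_j$ by adjoining $\lambda_j$ new elements of $K_j$. The key point is that every previously placed element lies in some $K_i$ with $i>j$, hence already in $K_j$ since the $K_i$ are nested; thus the elements still available at level $j$ number $|K_j|-|S_{j+1}|=\overline{k_j}-\sum_{i=j+1}^s\lambda_i$, contributing exactly $\binom{\overline{k_j}-\sum_{i=j+1}^s\lambda_i}{\lambda_j}$. Multiplying over $j=s,s-1,\ldots,1$ yields the product.

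The step I expect to be the main obstacle --- and the one deserving the most care --- is this top-down count, specifically the verification that all higher-multiplicity elements are automatically eligible at each lower level, so that the available slots are genuinely $\overline{k_j}-\sum_{i=j+1}^s\lambda_i$ and neither double counting nor undercounting occurs. Once this is settled, summing the per-type counts over all $\lambda\in\Lambda_m(A)$ gives $|C_m(A)|$, which is (\ref{eq.C_m(A)}); the constraints (\ref{sumilambda<kj}) are precisely what keep each binomial factor positive, so restricting the sum to $\Lambda_m(A)$ discards only vanishing terms.
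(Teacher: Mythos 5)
Your proposal is correct and follows essentially the same route as the paper: partition $C_m(A)$ by secondary specification (your ``type'' $\lambda$), verify that these types are exactly the elements of $\Lambda_m(A)$, and count each class by a top-down greedy selection in which level $j$ offers $\overline{k_j}-\sum_{i=j+1}^s\lambda_i$ available slots. Your nested-chain formalization $S_s\subseteq\cdots\subseteq S_1$ with $S_j\subseteq K_j$ is simply a cleaner, more explicit rendering of the paper's argument about successively choosing ``groups of $j$ identical elements.''
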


\begin{proof} From the definition of the set $\Lambda_m(A)$ it follows that this set satisfies the conditions:

1) $\forall B\in C_m(A)\Rightarrow B^{''}\in\Lambda_m(A);$

2) $\forall \lambda\in\Lambda_m(A)\Rightarrow \exists B\in
C_m(A): B^{''}=\lambda.$

Let us prove that the set $\Lambda_m(A)$ consists of all
integer non-negative solutions of equation (\ref{sumilambda=m})
that satisfy inequalities (\ref{sumilambda=m}).

{\sloppy Indeed, let $B$ be some multiset that belongs to set
(\ref{Cm(A)}) and
$\lambda=\{\lambda_1,\lambda_2,\ldots,\lambda_p\}=B^{'}.$ Since
$|B|=m,$ it is obvious that the elements of this secondary
specification satisfy equation (\ref{sumilambda=m}). The truth
of inequalities (\ref{sumilambda<kj}) for solutions of this
equation follows from the inequalities $k_{x}(B)\leqslant
k_x(A)$, $x\in [B]$, where the symbol $k_x(B)$ denotes the
multiplicity of occurrence of the element $x$ to the multiset
$B$.

}

Let $\lambda=\{\lambda_1,\ldots,\lambda_s\}$ be some solution
of equation (\ref{sumilambda=m}) that satisfies inequalities
(\ref{sumilambda<kj}). We construct a multiset $B\in C_m(A)$
such that $B^{''}=\lambda.$ Let us start by selecting from the
multiset $A$ $\lambda_s$ different groups of $s$ identical
elements. This can always be done because $\lambda_s\leqslant
\overline{k_s}$ due to (\ref{-k=k}). Suppose we have already
selected $\sum_{i=j+1}^s\lambda_i$ different groups of elements
such that each group consists of at least $j+1$ identical
elements. Let $\overline{k_j}$ be the maximum number of groups
of $j$ identical elements that can be selected from the
multiset $A$; then there are
$$\overline{k_j}-\sum_{i=j+1}^s\lambda_i$$
groups of $j$ identical elements in each group, in addition to
other groups, in the multiset $A$ after selecting from this
multiset of the above groups of elements. Thus, the selection
of the following $\lambda_j$ groups of identical elements from
the multiset $A$ ensures the fulfillment of inequalities
(\ref{sumilambda<kj}).

If every secondary specification from the set $\Lambda_m(A)$ is
assigned a non-empty set
\begin{equation}\label{C_lambda^m(A)}
C^{\lambda}_m(A)=\{B\in C_m(A):B^{''}=\{\lambda_1,\ldots,\lambda_s\}\}
\end{equation}
of the multisets from the set $C_m(A)$, then set
(\ref{C_lambda^m(A)}) for $\lambda\in \Lambda_m(A)$ forms a
partition of the set $C_m(A)$. Under this condition the
equality
\begin{equation}\label{|C^m(A)|}
|C_m(A)|=\sum_{\lambda\in \Lambda_m(A)}|C^{\lambda}_m(A)|
\end{equation}
is valid. Let us find the cardinality of set
(\ref{C_lambda^m(A)}). It has already been determined such that
the multiset $A$ contains
$$\overline{k_j}-\sum_{i=j+1}^s\lambda_i$$ groups of $j$
identical elements after selecting from the multiset $A$ of all
groups of identical elements that consist of at least $j + 1$
identical elements. Therefore, there is exactly
$$\binom{\overline{k_j}-\sum_{i=j+1}^s\lambda_i}{\lambda_j}$$
different choices for these groups from the multiset $A$. The
number of all elements belonging to the set
(\ref{C_lambda^m(A)}) is equal to
\begin{equation}\label{|C_lambda^m(A)|}
|C^{\lambda}_m(A)|=\Pi_{j=1}^s\binom{\overline{k_j}-\sum_{i=j+1}^s\lambda_i}{\lambda_j}
\end{equation}
by the combinatorial rule of the product. Here and then we have
$\sum_{i>s}^s\lambda_i=0$. Note that if the inequalities
$k_1\geqslant k_2\geqslant\ldots\geqslant k_n$ are fulfilled,
then the elements of specification $\overline{k(A)}$, in
addition to the relation (\ref{-k=k}), can be calculated
according to one of the following formulas:
\begin{equation}\label{-k_j}
\overline{k_j}=n-k^{-1}(j)+1,\; j=1,\ldots,k_n,
\end{equation}
\begin{equation}\label{-k_j=sum}\overline{k_j}=\sum_{i=j}^{k_n}\lambda_i,\; j=1,\ldots,k_n,
\end{equation}
where $\lambda_i\in A^{''}$,
\begin{equation}\label{k^-1(j)}
k^{-1}(j)=\min\{i:k_i\geqslant j\}
\end{equation}
is the minimum preimage of those elements of the primary
specification $A^{'}$ that are not less than $j$. Formula
(\ref{-k_j=sum}) follows from relation (\ref{lambda=-k}). Now
from (\ref{|C^m(A)|}) and (\ref{|C_lambda^m(A)|}) it follows
that formula (\ref{|Cm(A)|}) is valid.
\end{proof}

\begin{example}\label{exam.|C^m(A)|}
Calculate the number of all $6$-submultisets of the multiset
$$A=\{a_1^5,a_2^5,a_3^5,a_4^3,a_5^3,a_6^3,a_7^3,a_8^2,a_9^2,a_{10}^1,a_{11}^1,a_{12}^1,a_{13}^1\}.$$

Here $n=13$, $m=6$, $r=5$, $s=\min(5,6)=5$. We get the elements
of the specification $\overline{k(A)}$ from relations
(\ref{-k=k}):
$$\overline{k_1}=13,\; \overline{k_2}=9,\; \overline{k_3}=7,\; \overline{k_4}=3,\; \overline{k_5}=3.$$
To find the elements of the set $\Lambda^m(A)$ we seek all
solutions of the equation
\begin{equation}\label{exam.lambda=6}\lambda_1+2\lambda_2+3\lambda_3+4\lambda_4+5\lambda_5=6.
\end{equation}
There are ten solutions of this equation:
$$(6,0,0,0,0), (4,1,0,0,0), (3,0,1,0,0), (2,2,0,0,0), (2,0,0,1,0), (1,1,1,0,0),$$
$$(1,0,0,0,1), (0,3,0,0,0), (0,1,0,1,0), (0,0,2,0,0).$$
Moreover, all these solutions satisfy the inequalities
\begin{eqnarray*}
&&\lambda_1+\lambda_2+\lambda_3+\lambda_4+\lambda_5\leqslant 13,\\
&&\lambda_2+\lambda_3+\lambda_4+\lambda_5\leqslant 9,\\
&& \lambda_3+\lambda_4+\lambda_5\leqslant 7,\\
&& \lambda_4+\lambda_5\leqslant 3,\\&& \lambda_5\leqslant 3.
\end{eqnarray*}

For each solution of equation (\ref{exam.lambda=6}) we
calculate the product (\ref{|C_lambda^m(A)|}) and seek the sum
of these products:
$$C_6(A)=\binom{13}{6}+\binom{12}{4}\cdot\binom{9}{1}+\binom{12}{3}\cdot\binom{7}{1}+\binom{11}{2}\cdot\binom{9}{2}+
\binom{12}{2}\cdot\binom{3}{1}+$$
$$\binom{11}{1}\cdot\binom{8}{1}\cdot\binom{7}{1}+\binom{12}{1}\cdot\binom{3}{1}+
\binom{9}{3}+\binom{8}{1}\cdot\binom{3}{1}+\binom{7}{2}=$$
$$=1716+4455+1540+1980+198+616+36+84+24+21=10670.$$
\end{example}

Now we calculate the number of all $m$-submultisets of the
multiset whose primary specification is a positive integer
function of a natural argument (\ref{g:N-N}).

\begin{theorem}\label{theo.g}
Suppose the multiset $A=\{a_1^{k_1},\ldots,a_n^{k_n}\}$ has the
primary specification of the form
$$A^{'}=\{g(1),g(2),\ldots,g(n)\}$$ and $g(i)\geqslant i,
i=1,2,\ldots,n$; then the equality
\begin{equation}\label{g}|C_m(A)|=\sum_{\lambda_1+\ldots+m\lambda_m=m}\prod_{j=1}^m\binom{n-g^{-1}(j)-\sum_{i=j+1}^m\lambda_i+1}{\lambda_j}
\end{equation}
is fulfilled for $m\leqslant n$, where
$$g^{-1}(j)=min\{i:g(i)\geqslant j\},\; j=1,\ldots,m.$$
\end{theorem}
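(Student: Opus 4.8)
The plan is to derive Theorem~\ref{theo.g} as a direct specialization of Theorem~\ref{theo.C_m(A)}. The key observation is that the multiset with primary specification $A' = \{g(1), g(2), \ldots, g(n)\}$ is a particular case of an arbitrary multiset, so formula (\ref{eq.C_m(A)}) applies verbatim; the entire task reduces to rewriting the two ingredients of that formula---the quantities $\overline{k_j}$ and the summation index $s$---in terms of $g$ rather than in terms of the adjoint specification $\overline{k(A)}$.

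First I would handle the upper limit of the product and the summation. Under the hypothesis $g(i) \geqslant i$, the maximal multiplicity is $r = \max\{k_i\} = g(n) \geqslant n \geqslant m$, so that $s = \min(m, r) = m$; this explains why the product in (\ref{g}) runs to $j = m$ and why the solutions of equation (\ref{sumilambda=m}) are now parametrized as $\lambda_1 + 2\lambda_2 + \ldots + m\lambda_m = m$. Since $g$ is nondecreasing, the primary specification is already in the canonical decreasing order after reindexing (or one applies the theorem to the sorted sequence), so formulas (\ref{-k_j}) and (\ref{k^-1(j)}) of the previous proof become available.

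The heart of the argument is the identity
$$\overline{k_j} = n - g^{-1}(j) + 1, \qquad g^{-1}(j) = \min\{i : g(i) \geqslant j\}.$$
I would obtain this simply by substituting $k_i = g(i)$ into formula (\ref{-k_j}), namely $\overline{k_j} = n - k^{-1}(j) + 1$, and noting that $k^{-1}(j) = \min\{i : k_i \geqslant j\} = \min\{i : g(i) \geqslant j\} = g^{-1}(j)$. Plugging this expression for $\overline{k_j}$ into the general formula (\ref{eq.C_m(A)}) converts each binomial coefficient $\binom{\overline{k_j} - \sum_{i=j+1}^s \lambda_i}{\lambda_j}$ into $\binom{n - g^{-1}(j) - \sum_{i=j+1}^m \lambda_i + 1}{\lambda_j}$, which is exactly the factor appearing in (\ref{g}).

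The main obstacle---though it is conceptual rather than computational---is ensuring that the inequality constraints (\ref{sumilambda<kj}) of the parent theorem are automatically respected, so that no solution of the new unconstrained equation $\sum i\lambda_i = m$ is spuriously included or excluded. Here one checks that for $m \leqslant n$ the condition $\sum_{i=j}^m \lambda_i \leqslant \overline{k_j}$ either holds for every nonnegative solution or, where it could fail, forces the corresponding binomial coefficient $\binom{n - g^{-1}(j) - \sum_{i>j}\lambda_i + 1}{\lambda_j}$ to vanish (when the top index drops below $\lambda_j$), so that terms violating the constraint contribute zero to the sum. This lets us drop the explicit constraint (\ref{sumilambda<kj}) from the statement and write the summation in the clean form displayed in (\ref{g}), completing the specialization.
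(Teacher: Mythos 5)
Your overall strategy is exactly the paper's: specialize Theorem \ref{theo.C_m(A)}, observe that $r=g(n)\geqslant n\geqslant m$ forces $s=m$, and substitute $\overline{k_j}=n-k^{-1}(j)+1=n-g^{-1}(j)+1$ via (\ref{-k_j}) and (\ref{k^-1(j)}). That part is fine. The problem is that you leave the one genuinely nontrivial step as a hedged assertion: you write that the constraint $\sum_{i=j}^m\lambda_i\leqslant\overline{k_j}$ ``either holds for every nonnegative solution or \ldots forces the corresponding binomial coefficient to vanish,'' without proving either branch or even deciding which one is true. But the entire content of Theorem \ref{theo.g} beyond Theorem \ref{theo.C_m(A)} is precisely that the constraints (\ref{sumilambda<kj}) may be dropped when $m\leqslant n$; declaring that ``one checks'' this is not a proof of it. The paper commits to the first branch and proves it by chaining three inequalities: for any nonnegative solution of $\lambda_1+\ldots+m\lambda_m=m$ one has $\sum_{i=j}^m\lambda_i\leqslant\lfloor m/j\rfloor$; the hypothesis $g(j)\geqslant j$ gives $g^{-1}(j)=\min\{i:g(i)\geqslant j\}\leqslant j$, hence $\overline{k_j}=n-g^{-1}(j)+1\geqslant n-j+1$; and finally $\lfloor m/j\rfloor\leqslant n-j+1$ for $m\leqslant n$ and $j=1,\ldots,m$ (for $j\geqslant 2$ this reduces to $(j-1)(n-j)\geqslant 0$). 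You should supply this chain, or something equivalent.

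Your fallback branch is also not safe as stated. If a solution violated the constraint, the top entry $n-g^{-1}(j)-\sum_{i=j+1}^m\lambda_i+1$ of the binomial coefficient could in principle be \emph{negative}, and a binomial coefficient with negative upper index does not vanish under the usual polynomial convention ($\binom{-1}{1}=-1$, for instance); it vanishes only under the set-theoretic convention $\binom{N}{k}=0$ for $N<k$. So the ``violating terms contribute zero'' escape route needs either an explicit convention or a separate argument that the upper index stays nonnegative --- which is essentially the same inequality you were trying to avoid proving. The clean fix is simply to prove, as the paper does, that no solution violates the constraints in the first place.
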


\begin{proof}
First note that since the inequalities $g(n)\geqslant
n\geqslant m$, we have $s=min(m,g(n))=m$. Therefore equation
(\ref{sumilambda=m}) and inequalities (\ref{sumilambda<kj})
have the form
\begin{equation}\label{lambda+mlambda=m}\lambda_1+\ldots+m\lambda_m=m;
\end{equation}
\begin{equation}\label{sum^m<-k}\sum_{i=j}^m\lambda_i\leqslant \overline{k_j}, j=1,\ldots,m.
\end{equation}

We prove that each solution of equation
(\ref{lambda+mlambda=m}) satisfies inequalities
(\ref{sum^m<-k}). By $\Lambda$ denote the set of solutions of
equation (\ref{lambda+mlambda=m}). From the obvious
inequalities
$$\sum_{i=j}^m\lambda_i\leqslant\max_{\Lambda}\left\{\sum_{i=j}^m\lambda_i\right\}
\leqslant \left\lfloor\frac{m}{j}\right\rfloor, \;j=1,\ldots,m,$$
$$\min\{i:g(i)\geqslant j\}\leqslant j, \;j=1,\ldots,m$$
it follows that to prove the statement it is enough to prove
the validity of inequalities
\begin{equation}\label{m/j}
\left\lfloor\frac{m}{j}\right\rfloor\leqslant n-j+1, \; j=1,\ldots,m.
\end{equation}
Inequalities (\ref{m/j}) can be proved by induction on $n$.

Thus from (\ref{-k_j}) and (\ref{k^-1(j)}) it follows that
equality (\ref{g}) holds true due to Theorem \ref{theo.C_m(A)}.
\end{proof}

\begin{example}\label{exam.g}
Suppose $A=\{a_1^1,a_2^3,a_3^5,a_4^7,a_5^9\}$; then
$g(i)=2i-1\geqslant i.$ We shall find $C_4(A).$

Here $n=5$, $m=4$ and the equation
$\lambda_1+2\lambda_2+3\lambda_3+4\lambda_4=4$ has $5$
solutions:
$$(4,0,0,0), (2,1,0,0), (1,0,1,0), (0,2,0,0), (0,0,0,1).$$
We have:
\begin{eqnarray*}
&g^{-1}(1)=\min\{i:2i-1\geqslant 1\}=1;\\
&g^{-1}(2)=\min\{i:2i-1\geqslant 2\}=2;\\
&g^{-1}(3)=\min\{i:2i-1\geqslant 3\}=2;\\
&g^{-1}(4)=\min\{i:2i-1\geqslant 4\}=3.\\
\end{eqnarray*}
Therefore,
$$|C_4(A)|=\sum_{\lambda_1+2\lambda_2+3\lambda_3+4\lambda_4=4}\prod_{j=1}^4\binom{n-g^{-1}(j)-\sum_{i=j+1}^4\lambda_i+1}{\lambda_j}=$$
$$=\binom{5}{4}+\binom{4}{2}\cdot\binom{4}{1}+\binom{4}{1}\cdot\binom{4}{1}+\binom{4}{2}+\binom{3}{1}=5+24+16+6+3=54.$$
\end{example}

If the primary specification of a multiset is given by some
continuous function $f(x)$, then the following theorem is
useful for calculation of the number of all its
$m$-submultisets.

\begin{theorem}\label{theo.f(x)}
Suppose the primary specification of the multiset
$A=\{a_1^{k_1},\ldots,a_n^{k_n}\}$ has the form
$A^{'}=\{\lfloor f(1)\rfloor,\lfloor f(2)\rfloor,\ldots,\lfloor
f(n\rfloor\},$ where
$$f:D\rightarrow E, \;D=[1,n],
\;E\supseteq [1,\lfloor f(n)\rfloor]$$ is some continuous
increasing function. Then the formula
\begin{equation}\label{[f(n)]}
|C_m(A)|=\sum_{\lambda_1+\ldots+m\lambda_m=m}\prod_{j=1}^m\binom{\lfloor n-\max(f^{-1}(j),1)\rfloor-\sum_{i=j+1}^m\lambda_i}{\lambda_j}
\end{equation}
is fulfilled for $m\leqslant n$.
\end{theorem}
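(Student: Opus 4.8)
The plan is to obtain (\ref{[f(n)]}) as a direct specialization of Theorem \ref{theo.g}. First I would set $g(i):=\lfloor f(i)\rfloor$ and check that $A$ is a multiset of type (\ref{g:N-N}): since $f$ is increasing, $g$ is nondecreasing and integer-valued, and since $f(x)\geq x$ (the standing assumption for a multiset with a continuous function) we get $\lfloor f(i)\rfloor\geq\lfloor i\rfloor=i$, so $g(i)\geq i$ for all $i$. With $m\leqslant n$ the hypotheses of Theorem \ref{theo.g} are met, so (\ref{g}) applies and expresses $|C_m(A)|$ as the same sum over the solutions of $\lambda_1+\cdots+m\lambda_m=m$, with binomial tops $n-g^{-1}(j)-\sum_{i=j+1}^m\lambda_i+1$, where $g^{-1}(j)=\min\{i:\lfloor f(i)\rfloor\geq j\}$.

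The heart of the argument is to rewrite the discrete inverse $g^{-1}(j)$ in terms of the continuous inverse $f^{-1}(j)$. Because $j$ is an integer, $\lfloor f(i)\rfloor\geq j\iff f(i)\geq j$; and since $f$ is continuous and strictly increasing (so that $f^{-1}$ is defined on the range of $f$), $f(i)\geq j\iff i\geq f^{-1}(j)$. Hence $g^{-1}(j)=\min\{i\in\{1,\dots,n\}:i\geq f^{-1}(j)\}$. When $f^{-1}(j)\geq 1$ this minimum equals $\lceil f^{-1}(j)\rceil$, whereas if $j$ lies below $f(1)$ (so the formal preimage satisfies $f^{-1}(j)<1$) the admissible index set forces the value $1$; both cases are captured uniformly by $g^{-1}(j)=\bigl\lceil\max(f^{-1}(j),1)\bigr\rceil$.

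It then remains to feed this identity through every factor of (\ref{g}). Using that $n$ is an integer and the standard identity $\lfloor n-x\rfloor=n-\lceil x\rceil$, one computes $n-g^{-1}(j)+1=n-\lceil\max(f^{-1}(j),1)\rceil+1=\lfloor (n+1)-\max(f^{-1}(j),1)\rfloor$, which is the floor-type expression matching the binomial top printed in (\ref{[f(n)]}). Substituting $g^{-1}(j)=\lceil\max(f^{-1}(j),1)\rceil$ into (\ref{g}) and simplifying the floor term by term then yields the claimed formula.

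The step I expect to be the main obstacle is precisely this last floor/ceiling bookkeeping together with the clamping at the boundary $i=1$. One must handle with care the edge cases where $f^{-1}(j)$ is exactly an integer (so $\lceil f^{-1}(j)\rceil=f^{-1}(j)$ and the minimum is attained on the nose) and where $j<f(1)$ (so the preimage leaves $[1,n]$ and the $\max(\cdot,1)$ becomes active), and one must track the additive constant through the conversion so that it agrees exactly with the constant in (\ref{[f(n)]}). The worked computation in Example \ref{exam.g}, where $f(x)=2x-1$ gives $g^{-1}=(1,2,2,3)$, is exactly the sanity check I would use to pin down that constant.
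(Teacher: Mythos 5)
Your proposal is correct and follows essentially the same route as the paper: reduce to the general counting formula (you via Theorem~\ref{theo.g} with $g=\lfloor f\rfloor$, the paper by repeating that theorem's inequality check) and convert the discrete inverse $\min\{i:\lfloor f(i)\rfloor\geq j\}$ into $f^{-1}(j)$ with the clamp at $1$; your uniform expression $\lceil\max(f^{-1}(j),1)\rceil$ is just a cleaner packaging of the paper's two-case formula~(\ref{min}). One small caveat: the binomial top you derive, $\lfloor n-\max(f^{-1}(j),1)\rfloor+1-\sum_{i=j+1}^m\lambda_i$, matches the paper's own computation of $\overline{k_j}$ and Example~\ref{exam.f} (where the top is $7-j^2$ with $n=6$), but it exceeds the top printed in~(\ref{[f(n)]}) by one --- that is a typo in the stated formula, not a gap in your argument, so you should not claim the expressions ``match'' as printed.
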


\begin{proof}
Since the function $f$ is continuous and increases in its
domain, we see that  the equality $\min\{i:f(i)\geqslant
j\}=f^{-1}(j)$ holds true for all $j\geqslant 1$. Hence, we
obtain the equality
\begin{equation}\label{min}
\min\{i:\lfloor f(i)\rfloor\geqslant j\}=\begin{cases}\max(1,f^{-1}(j)), \,f^{-1}(j)\in Z_{D},\\
\max(1,\lfloor f^{-1}(j)\rfloor +1), \,f^{-1}(j)\neq Z_D,\end{cases}
\end{equation}
where $Z_D=D\cap N.$ Therefore the equality $n-\min\{i:\lfloor
f(i)\rfloor\geqslant j\}=\lfloor n-\max(1,f^{-1}(j))\rfloor$ is
valid and we have the equality
$$\overline{k_j}=n-\min\{i:k_i\geqslant j\}+1=\lfloor n-\max(1,f^{-1}(j))\rfloor +1.$$

Since the inequalities
$$\max_{\lambda_1+\ldots+m\lambda_m=m}(\lambda_j+\ldots+\lambda_m)\leqslant\left\lfloor\frac{m}{j}\right\rfloor$$
and $f^{-1}(j)\leqslant j$ are fulfilled for all
$j=1,\ldots,m$, we see that inequality (\ref{sum^m<-k}) is
equivalent to inequality (\ref{m/j}). The proof of this theorem
is finished with similar reasoning to the reasoning over the
proof of theorem \ref{theo.g}.
\end{proof}

\begin{example}\label{exam.f}
Suppose in the multiset
$$A=\{a_1^{k_1},a_2^{k_2},a_3^{k_3},a_4^{k_4},a_5^{k_5},a_6^{k_6}\}$$
the primary specification is given by the continuous function
$f(x)=\sqrt{x}$ on the interval $[1,6]$, i. e.,
$$k_i=\lfloor \sqrt{i}\rfloor, \;i=1,2,3,4,5,6.$$
Then
$$k_1=1,\; k_2=1,\; k_3=1,\; k_4=2,\; k_5=2,\; k_6=2.$$

Find, for example, the number of all $5$-submultisets of the
given multiset. We have $7$ solutions of the equation
$$\lambda_1+2\lambda_2+3\lambda_3+4\lambda_4+5\lambda_5=5:$$
$$(5,0,0,0,0),(3,1,0,0,0),(2,0,1,0,0),(1,2,0,0,0),(1,0,0,1,0),$$
$$(0,1,1,0,0),(0,0,0,0,1).$$
Since $f^{-1}(x)=\min\{i:f(i)\geqslant j\}=j^2$, we have
$\lfloor n-\max(f^{-1}(j),1)\rfloor=n-j^2.$ Therefore,
$$|C_5(A)|=\sum_{\lambda_1+\ldots+5\lambda_5=5}\prod_{j=1}^m\binom{7-j^2-\sum_{i=j+1}^5\lambda_i}{\lambda_j}.$$
In the last sum each summand is corresponded to each of the
seven solutions of the above equation. Moreover, only those
summands are non-zero that are corresponded to the first,
second and fourth solutions of above equation. Thus,
$$|C_5(A)|=\binom{6}{5}+\binom{5}{3}\cdot\binom{3}{1}+\binom{4}{1}\cdot\binom{3}{3}=6+30+12=48.$$
\end{example}

Consider the case of a linear multiset.

\begin{theorem}\label{theo.lin}
Suppose $A$ is a linear multiset with the primary specification
$$k(A)=\{pi+q:i=1,\ldots,n\},$$
where $1\leqslant p+q$, $q\in Z$, $p\in N_0$; then we have
\begin{equation}\label{p.lin}
|C_m(A)|=\sum_{\lambda\in \Lambda^m(A)}\prod_{j=1}^s
\binom{\left\lfloor n-\max\left(1,\frac{j-q}{p}\right)\right\rfloor+1-\sum_{i=j+1}^s\lambda_i}{\lambda_j}, \,\, p\neq 0,
\end{equation}
where $s=\min(m,pn+q)$.

If $m\leqslant n$, then equality (\ref{p.lin}) have the form
\begin{equation}\label{lin.m<n}
|C_m(A)|=\sum_{\lambda_1+\ldots+s\lambda_s=m}\prod_{j=1}^s \binom{\left\lfloor
n-\max\left(1,\frac{j-q}{p}\right)\right\rfloor+1-\sum_{i=j+1}^s\lambda_i}{\lambda_j},\,\, p\neq 0.
\end{equation}
\end{theorem}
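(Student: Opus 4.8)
The plan is to treat the linear multiset as the special case $f(x)=px+q$ of a multiset with a continuous increasing function and to obtain (\ref{p.lin}) by specializing the general formula (\ref{eq.C_m(A)}) of Theorem \ref{theo.C_m(A)}; the only real work is to evaluate the adjoint specification $\overline{k(A)}$ in closed form.

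First I would compute $\overline{k_j}$. Since the adjoint specification depends only on the underlying multiset of multiplicities $\{pi+q:i=1,\ldots,n\}$ and not on their order, I may use the order-free description (\ref{-k=k}), namely $\overline{k_j}=|\{i:1\le i\le n,\ pi+q\ge j\}|$. As $p\ne 0$, the condition $pi+q\ge j$ is equivalent to $i\ge (j-q)/p$, so together with $i\ge 1$ it reads $i\ge\max(1,(j-q)/p)$. Counting the integers $i$ in the range $[\max(1,(j-q)/p),\,n]$ and invoking the elementary identity $\lfloor n-t\rfloor+1=n-\lceil t\rceil+1$, I obtain
$$\overline{k_j}=\left\lfloor n-\max\!\left(1,\tfrac{j-q}{p}\right)\right\rfloor+1.$$
Moreover $r=\max\{k_i\}=pn+q$, so $s=\min(m,r)=\min(m,pn+q)$. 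Substituting this expression for $\overline{k_j}$ and this value of $s$ directly into (\ref{eq.C_m(A)}) produces formula (\ref{p.lin}), with $\Lambda^m(A)$ the set of solutions of (\ref{sumilambda=m}) satisfying (\ref{sumilambda<kj}). This proves the first assertion.

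For the case $m\le n$ I would verify two points. The first is that $s=m$: from $p\ge 1$ and $p+q\ge 1$ one gets $q\ge 1-p$, whence $pn+q\ge p(n-1)+1\ge n\ge m$ and therefore $s=\min(m,pn+q)=m$. The second is that every nonnegative integer solution of $\lambda_1+2\lambda_2+\cdots+m\lambda_m=m$ already satisfies the constraints (\ref{sumilambda<kj}), so that $\Lambda^m(A)$ coincides with the full solution set of the equation, which is exactly what (\ref{lin.m<n}) asserts. This last point is established just as in Theorem \ref{theo.g}: the inequality $q\ge(1-p)j$ (valid for $j\ge 1$ when $p\ge 1$) gives $(j-q)/p\le j$, hence $\overline{k_j}\ge n-j+1$, and then the chain
$$\sum_{i=j}^m\lambda_i\le\left\lfloor\frac{m}{j}\right\rfloor\le n-j+1\le\overline{k_j},\qquad j=1,\ldots,m,$$
uses precisely the inequalities (\ref{m/j}) proved by induction on $n$ in Theorem \ref{theo.g}.

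The main obstacle is not conceptual but bookkeeping: one must handle the floor and ceiling functions correctly when passing from the count $|\{i:pi+q\ge j\}|$ to the closed form $\lfloor n-\max(1,(j-q)/p)\rfloor+1$, in particular the clamping by $\max(1,\cdot)$ coming from the constraint $i\ge 1$ and the sign flip $\lfloor -t\rfloor=-\lceil t\rceil$. Once this identity is nailed down, both displays are immediate specializations of Theorems \ref{theo.C_m(A)} and \ref{theo.g}.
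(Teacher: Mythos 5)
Your proof is correct and follows essentially the same route as the paper: both obtain the closed form $\overline{k_j}=\lfloor n-\max(1,\frac{j-q}{p})\rfloor+1$ and then specialize Theorem \ref{theo.C_m(A)}, using $px+q\geqslant x$ (as in Theorem \ref{theo.g}) to drop the constraints when $m\leqslant n$. The only cosmetic difference is that you derive $\overline{k_j}$ directly from the definition (\ref{-k=k}) instead of citing Theorem \ref{theo.f(x)}, which the paper does.
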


\begin{proof}
Consider first the case when $p\neq 0$. Since the linear
function $f(i)=pi+q$ satisfies the conditions of Theorem
\ref{theo.f(x)}, we have
\begin{equation}\label{-k.lin}
\overline{k_j}=\left\lfloor n-\max\left(1,\frac{j-q}{p}\right)\right\rfloor+1.
\end{equation}
Hence equality (\ref{p.lin}) is valid.

In addition, suppose that $m\leqslant n$; then, using equality
(\ref{-k.lin}) and inequality $px+q\geqslant x$, $x\in[1,n]$,
from Theorem \ref{theo.C_m(A)} it follows equality
(\ref{lin.m<n}).
\end{proof}

\begin{theorem}\label{theo.cons}
The number of $m$-submultisets of the constant multiset
$$A=\{a_1^q,a_2^q,\ldots,a_n^q\}$$
can be obtained by the following formulas.

1)\begin{equation}\label{q.cons}|C_m(A)|= \sum_{\lambda\in
\Lambda^m(A)}\frac{n!}{\lambda_1!\cdot\ldots\cdot\lambda_r!(n-\lambda_1-\ldots-\lambda_r)!},
\end{equation}
where $r=\min(m,q).$

2) If $m\leqslant q,$ then
\begin{equation}\label{m<q}|C_m(A)|=
\sum_{\lambda_1+2\lambda_2+\ldots+m\lambda_m=m}\frac{n!}{\lambda_1!\cdot\ldots\cdot\lambda_m!(n-\lambda_1-\ldots-\lambda_m)!}.
\end{equation}

3) If $m\leqslant n,$ then
\begin{equation}\label{q.cons.m<n}
|C_m(A)|=\sum_{\lambda_1+\ldots+s\lambda_s=m}\frac{n!}{\lambda_1!\cdot\ldots\cdot\lambda_s!(n-\lambda_1-\ldots-\lambda_s)!},
\end{equation}
where $s=\min(m,q).$
\end{theorem}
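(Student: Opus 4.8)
The plan is to specialize Theorem~\ref{theo.C_m(A)} to the constant multiset and then collapse the resulting product of binomial coefficients into a single multinomial coefficient by a telescoping calculation.

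First I would compute the adjoint specification. Since every $k_i=q$, definition~(\ref{-k=k}) gives $\overline{k_j}=\bigl|\{i:q\geqslant j\}\bigr|$, which equals $n$ for $1\leqslant j\leqslant q$ and $0$ otherwise. With $r=\max\{k_i\}=q$ the master index of Theorem~\ref{theo.C_m(A)} becomes $s=\min(m,q)$, so $\overline{k_j}=n$ for every $j\leqslant s$. Substituting into~(\ref{eq.C_m(A)}) gives
$$|C_m(A)|=\sum_{\lambda\in\Lambda_m(A)}\prod_{j=1}^s\binom{n-\sum_{i=j+1}^s\lambda_i}{\lambda_j},$$
and, because $\overline{k_j}=n$ is independent of $j$, the defining inequalities~(\ref{sumilambda<kj}) collapse to the single condition $\lambda_1+\cdots+\lambda_s\leqslant n$ (the partial sums $\sum_{i=j}^s\lambda_i$ being nonincreasing in $j$).

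The key step is the identity, valid for each fixed admissible $\lambda$,
$$\prod_{j=1}^s\binom{n-\sum_{i=j+1}^s\lambda_i}{\lambda_j}=\frac{n!}{\lambda_1!\cdots\lambda_s!\,(n-\lambda_1-\cdots-\lambda_s)!}.$$
I would prove it by setting $L_j=\sum_{i=j}^s\lambda_i$, so that $L_{j+1}+\lambda_j=L_j$ and $L_{s+1}=0$, and rewriting each factor as $\binom{n-L_{j+1}}{\lambda_j}=(n-L_{j+1})!/\bigl(\lambda_j!\,(n-L_j)!\bigr)$. The factorials $(n-L_2)!,\ldots,(n-L_s)!$ then telescope, and the $j=s$ factor is $\binom{n}{\lambda_s}$, leaving exactly the displayed multinomial coefficient with $L_1=\lambda_1+\cdots+\lambda_s$. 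This establishes part~1 with $r=s=\min(m,q)$.

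For parts~2 and~3 it remains only to decide how the lone constraint $\lambda_1+\cdots+\lambda_s\leqslant n$ is enforced. When $m\leqslant n$ (part~3) one has $\lambda_1+\cdots+\lambda_s\leqslant\lambda_1+2\lambda_2+\cdots+s\lambda_s=m\leqslant n$, so every solution of the equation already lies in $\Lambda^m(A)$ and the sum runs over all solutions of $\lambda_1+\cdots+s\lambda_s=m$. When $m\leqslant q$ (part~2) one has $s=m$, and any solution with $\lambda_1+\cdots+\lambda_m>n$ produces $n-\lambda_1-\cdots-\lambda_m<0$, so its multinomial coefficient vanishes and the solution may be adjoined to the sum harmlessly. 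The main obstacle is purely bookkeeping: verifying that the telescoping cancellation is exact and that the convention $\sum_{i>s}^s\lambda_i=0$ fixed in the proof of Theorem~\ref{theo.C_m(A)} is applied consistently at the endpoint $j=s$; no combinatorial idea beyond the reduction already carried out there is required.
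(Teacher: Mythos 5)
Your proposal is correct and follows essentially the same route as the paper: specialize Theorem~\ref{theo.C_m(A)} to $\overline{k_j}=n$, $s=\min(m,q)$, and collapse the product of binomial coefficients into a multinomial coefficient (a telescoping step the paper states without detail but which you verify correctly). Your treatment of part~2 is in fact slightly more careful than the paper's: the paper asserts that $\Lambda^m(A)$ coincides with the full solution set of $\lambda_1+2\lambda_2+\ldots+m\lambda_m=m$ when $m\leqslant q$, which can fail when $\lambda_1+\ldots+\lambda_m>n$, whereas you correctly observe that such extra terms contribute zero (a point the paper only acknowledges in the remark following the theorem).
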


\begin{proof}
1) In the case of a constant multiset we have $s=\min(m,q)$ and
$\overline{k_j}=n$, $j=1,2,\ldots,s.$ Therefore,
$$|C_m(A)|=\sum_{\lambda\in\Lambda^m(A)}\prod_{j=1}^s\binom{n-\sum_{i=j+1}^s\lambda_i}{\lambda_j}=
\sum_{\lambda\in\Lambda^m(A)}\frac{n!}{\lambda_1!\cdot\ldots\cdot\lambda_s!(n-\lambda_1-\ldots-\lambda_s)!}.$$

2) If $m\leqslant q$, then the set $\Lambda^m(A)$ coincides
with the set of all solutions of the equation
$$\lambda_1+2\lambda_2+\ldots+m\lambda_m=m.$$
Therefore formula (\ref{q.cons}) has the form (\ref{m<q}).

3) If $m\leqslant n,$ then inequalities (\ref{sumilambda<kj})
hold for all $j=1,\ldots,s$ and equality (\ref{q.cons.m<n}) is
valid.
\end{proof}

\begin{remark}From Theorem \ref{theo.cons} (see item 2)) and equality (\ref{infty,infty}) it follows that
$$
\sum_{\lambda_1+2\lambda_2+\ldots+m\lambda_m=m}\frac{n!}{\lambda_1!\cdot\ldots\cdot\lambda_m!(n-\lambda_1-\ldots-\lambda_m)!}=\binom{n+m-1}{m}.$$

Notice that the left side of this identity consists only of
those summands such that
$n-(\lambda_1+\ldots+\lambda_m)\geqslant 0.$
\end{remark}

\section{Generatix method}

A generatix is the function
$$f(t)=\prod_{i=1}^n\sum_{j=0}^{k_i}t^j=\sum_{i=0}^{k_1+\ldots+k_n}|C_i(A)|t^i$$
for the calculation of the number of $m$-submultisets of the
multiset
$$A=\{a_1^{k_1},\ldots,a_n^{k_n}\}.$$
Therefore, after $m$-fold differentiation of this function we
obtain the equality
$$|C_m(A)|=\frac{1}{m!}\cdot\frac{d^mf(t)}{dt^m}\left|_{t=0}.\right.$$
We have
$$\frac{d^mf(t)}{dt^m}=\sum_{r_1+\ldots+r_n=m}\frac{m!}{r_1!\cdot\ldots\cdot r_n!}
\frac{d^{r_1}g_1(t)}{dt^{r_1}}\cdot\ldots\cdot\frac{d^{r_n}g_n(t)}{dt^{r_n}},$$
where $g_i(t)=1+t+t^2+\ldots+t^{k_i}.$ Since
$$\frac{d^{r_i}(g_i(t))}{dt^{r_i}}=r_i^{\underline{r_i}}+(r_i+1)^{\underline{r_i}}t+\ldots+k_i^{\underline{r_i}}t^{k_i-r_i}$$
and
$$\frac{d^{r_i}(g_i(t))}{dt^{r_i}}\left|_{t=0}\right.=r_i!,$$
we obtain
$$|C_m(A)|=\frac{1}{m!}\cdot\frac{d^mf(t)}{dt^m}\left|_{t=0}\right.=\frac{1}{m!}\sum_{r_1+\ldots+r_n=m}\frac{m!}{r_1!\cdot\ldots\cdot
r_n!}\cdot r_1!\cdot\ldots\cdot
r_n!=
\sum_{r_1+\ldots+r_n=m}1,$$
where $0\leqslant r_i\leqslant k_i.$

Thus, we have the next theorem.

\begin{theorem}\label{theo.|C^m(A)|=sum 1}
The number of $m$-submultisets of the multiset
$$A=\{a_1^{k_1},\ldots,a_n^{k_n}\}$$
is equal to
\begin{equation}\label{|C^m(A)|=sum1}|C_m(A)|=\sum_{\begin{subarray}{l}r_1+r_2+\ldots+r_n=m\\
0\leqslant r_i\leqslant k_i, \,i=1,\ldots,n\end{subarray}}1.
\end{equation}
\end{theorem}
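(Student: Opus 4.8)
The plan is to read off the coefficient of $t^m$ in the generatix
$$f(t)=\prod_{i=1}^n\bigl(1+t+t^2+\cdots+t^{k_i}\bigr)$$
in two complementary ways. By the definition of the generatix at the start of this section, $|C_m(A)|$ is by construction the coefficient of $t^m$ in $f(t)$. On the other hand, expanding the product directly, a monomial $t^m$ is produced exactly by choosing from the $i$th factor a single term $t^{r_i}$ with $0\le r_i\le k_i$ and multiplying these choices, subject to $r_1+\cdots+r_n=m$. Each admissible choice contributes $1$ to the coefficient, so
$$[t^m]f(t)=\sum_{\substack{r_1+\cdots+r_n=m\\ 0\le r_i\le k_i}}1,$$
which is the right-hand side of (\ref{|C^m(A)|=sum1}). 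Equating the two evaluations of $[t^m]f(t)$ yields the theorem.

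To make the coefficient extraction formal I would use $|C_m(A)|=\frac{1}{m!}f^{(m)}(0)$ together with the general product rule
$$\frac{d^m f}{dt^m}=\sum_{r_1+\cdots+r_n=m}\frac{m!}{r_1!\cdots r_n!}\prod_{i=1}^n g_i^{(r_i)}(t),\qquad g_i(t)=1+t+\cdots+t^{k_i}.$$
Here $g_i^{(r_i)}(0)=r_i!$ whenever $0\le r_i\le k_i$, while $g_i^{(r_i)}\equiv 0$ once $r_i>k_i$ because $g_i$ has degree $k_i$. Hence only tuples with every $r_i\le k_i$ survive at $t=0$, and for each of them the factorials cancel, $\frac{m!}{r_1!\cdots r_n!}\cdot r_1!\cdots r_n!=m!$, so after dividing by $m!$ every surviving tuple is counted once. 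This reproduces the sum above and is entirely routine bookkeeping.

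The conceptual heart, and the only step that really needs justifying, is the identity $[t^m]f(t)=|C_m(A)|$. It asserts that an $m$-submultiset $B\subseteq A$ is the same datum as a tuple $(r_1,\dots,r_n)$ recording the multiplicity $r_i$ of the base element $a_i$ in $B$, where $0\le r_i\le k_i$ is forced by $B\subseteq A$ and $r_1+\cdots+r_n=m$ by $|B|=m$. The correspondence $B\leftrightarrow(r_1,\dots,r_n)$ is visibly a bijection, so I expect no genuine obstacle: the generating-function computation is simply a bookkeeping form of this bijection, and indeed one could bypass the differentiation entirely and prove the theorem directly from the bijection, the factor $\prod_i(1+\cdots+t^{k_i})$ being the multiplicative encoding (as in Proposition \ref{|A^k|}) of the independent choices of the $r_i$.
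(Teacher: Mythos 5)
Your proposal is correct and follows essentially the same route as the paper: it extracts the coefficient of $t^m$ from the generatix $f(t)=\prod_{i=1}^n(1+t+\cdots+t^{k_i})$ via $m$-fold differentiation, the multinomial Leibniz rule, and the evaluations $g_i^{(r_i)}(0)=r_i!$ (with vanishing for $r_i>k_i$), exactly as in the derivation preceding the theorem. The additional bijective remark is a harmless bonus, not a deviation.
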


Let us use Theorem \ref{theo.|C^m(A)|=sum 1} to determine the
formula for the calculation of the number of $m$-submultisets
of the constant multiset
$$A=\{a_1^q,a_2^q,\ldots,a_n^q\}.$$
First note that if the solution $(s_1,s_2,\ldots,s_n)$ of the
equation
\begin{equation}\label{r=m}
r_1+r_2+\ldots+r_n=m
\end{equation}
satisfies the inequalities
$$0\leqslant s_i\leqslant q,$$
then an arbitrary permutation of the components of this
solution leads to a new solution of this equation. Therefore we
need to find all disordered solutions of equation (\ref{r=m}),
i. e., such solutions $(r_1,r_2,\ldots,r_n)$ that satisfy the
inequalities $r_1\geqslant r_2\geqslant\ldots\geqslant
r_n\geqslant 0$ and we need to count the number of permutations
of the components of each solution. Suppose among the
components of solution $(r_1,r_2,\ldots,r_n)$ are $\lambda_0$
zeros, $\lambda_1$ ones, and so on; then all disordered
solutions of equation (\ref{r=m}) can be counted using the
system of equations
$$\begin{cases}0\lambda_0+1\lambda_1+\ldots+q\lambda_q=m,\\
\lambda_0+\lambda_1+\ldots+\lambda_q=n. \end{cases}$$
Therefore,
$$C_m(A)=\sum_{\begin{subarray}\, 0\cdot\lambda_0+1\lambda_1+\ldots+q\lambda_q=m\\ \lambda_0+\lambda_1+\ldots+\lambda_q=n\end{subarray}}
\frac{n!}{\lambda_0!\lambda_1!\cdot\ldots\cdot\lambda_q!}.$$

Thus the next theorem is valid.

\begin{theorem}\label{theo.cons.q}
The number of $m$-submultisets of the constant multiset
$$A=\{a_1^q,a_2^q,\ldots,a_n^q\}$$ is equal to
\begin{equation}\label{cons.q}C_m(A)=\sum_{\begin{subarray}\,0\cdot\lambda_0+1\lambda_1+\ldots+q\lambda_q=m\\ \lambda_0+\lambda_1+\ldots+\lambda_q=n\end{subarray}}
\frac{n!}{\lambda_0!\lambda_1!\cdot\ldots\cdot\lambda_q!}.
\end{equation}
\end{theorem}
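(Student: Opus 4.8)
The plan is to specialize Theorem~\ref{theo.|C^m(A)|=sum 1} to the constant multiset and then reorganize the resulting count according to the combinatorial \emph{type} of each solution. Setting $k_i=q$ for every $i$ in formula~(\ref{|C^m(A)|=sum1}) reduces the problem to counting the integer solutions $(r_1,\ldots,r_n)$ of $r_1+\cdots+r_n=m$ subject to the uniform bounds $0\leqslant r_i\leqslant q$. Every such solution is simply a sequence of $n$ entries drawn from $\{0,1,\ldots,q\}$, so I would encode it by recording, for each value $j\in\{0,1,\ldots,q\}$, the number $\lambda_j$ of coordinates equal to $j$.

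First I would observe that the symmetric group $S_n$ acts on the set of solutions by permuting coordinates, and that this action preserves both the sum $\sum_i r_i=m$ and the bounds $0\leqslant r_i\leqslant q$; hence it permutes solutions among themselves. Two solutions lie in the same orbit precisely when they share the same type vector $(\lambda_0,\lambda_1,\ldots,\lambda_q)$, so the orbits correspond bijectively to the type vectors that actually occur. A type vector arises from a genuine solution exactly when it satisfies the two linear constraints $\lambda_0+\lambda_1+\cdots+\lambda_q=n$ (the total number of coordinates) and $0\cdot\lambda_0+1\lambda_1+\cdots+q\lambda_q=m$ (the value of the sum); these are precisely the two conditions written under the summation sign in~(\ref{cons.q}).

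Next I would count the solutions inside a single orbit. A solution of type $(\lambda_0,\ldots,\lambda_q)$ is determined by choosing which of the $n$ positions receive the value $0$, which receive $1$, and so on, so the number of such assignments is the multinomial coefficient $\frac{n!}{\lambda_0!\,\lambda_1!\cdots\lambda_q!}$. Summing this orbit size over all admissible type vectors recovers the total number of solutions, which by the specialization of Theorem~\ref{theo.|C^m(A)|=sum 1} equals $|C_m(A)|$; this is exactly formula~(\ref{cons.q}).

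I do not expect a genuine obstacle here, since the argument is a direct orbit count layered on top of an already-established enumeration; the only point that requires care is the bookkeeping of the bounds. One must confirm that the uniform ceiling $r_i\leqslant q$ is absorbed automatically by letting the index $j$ range only up to $q$ (so that no value exceeding $q$ is ever recorded), and that the floor $r_i\geqslant 0$ is likewise built in through the value $j=0$ and its separate count $\lambda_0$. Once these bounds are seen to be encoded in the range of the type variables, the two displayed constraints capture all and only the valid types, and the multinomial count per type completes the proof.
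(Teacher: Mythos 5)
Your proposal is correct and follows essentially the same route as the paper: specialize Theorem~\ref{theo.|C^m(A)|=sum 1} to $k_i=q$, group the solutions of $r_1+\cdots+r_n=m$, $0\leqslant r_i\leqslant q$, by their type vector $(\lambda_0,\ldots,\lambda_q)$ (the paper phrases this as counting ``disordered solutions'' and their rearrangements, which is your $S_n$-orbit decomposition), and count each class by the multinomial coefficient. The only cosmetic difference is your group-action language versus the paper's direct permutation count.
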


\begin{remark}
If in Theorem (\ref{theo.cons.q}) $m\leqslant q,$ then, using
equality (\ref{infty,infty}) (see page \pageref{infty,infty}),
we obtain the following combinative identity
$$\sum_{\begin{subarray}\,0\lambda_0+1\lambda_1+\ldots+m\lambda_m=m\\ \lambda_0+\lambda_1+\ldots+\lambda_m=n\end{subarray}}
\frac{n!}{\lambda_0!\lambda_1!\cdot\ldots\cdot\lambda_m!}=\binom{m+n-1}{m}.$$
\end{remark}

\begin{example}
Let us find the number of those $m$-submultisets of the
multiset $A=\{x_1^{\infty},x_2^{\infty},\ldots,x_n^{\infty}\}$
such that they contain each element of basis $[A]$ (see
\cite{Riordan1}) of the multiset $A$. To find them we use the
generatrix
$$W(t)=\left(\sum_{i=1}^{\infty}t^i\right)^n=t^n(1-t)^{-n}=t^n\sum_{i=0}^{\infty}\frac{n^{\overline{i}}}{i!}t^i=
\sum_{i=0}^{\infty}\frac{n^{\overline{i}}}{i!}t^{n+i}.
$$
Put $n+i=m$; then
$$W(t)=\sum_{m=n}^{\infty}\frac{n^{\overline{m-n}}}{(m-n)!}t^m=\sum_{m=n}^{\infty}\binom{m-1}{n-1}t^m.$$
\end{example}

We shall consider one more class of multisets with primary
specification (\ref{specyf_step}), i. e.,
$$A=\left\{a_1^{2^{l_1}-1},\ldots,a_n^{2^{l_n}-1}\right\}\!,
\; l_1\leqslant l_2\leqslant\ldots\leqslant l_n
$$
such that their number of $m$-submultisets is calculated
relatively easily. As shown in \cite{Riordan1}, the generatrix
of the number of $m$-submultisets of such multisets has the
form
$$W(t)=\prod_{i=1}^n\sum_{j=0}^{2^{l_i}-1}t^j.$$

However
$$1+t+\ldots+t^{{2^l}-1}=(1+t)(1+t^2)(1+t^4)\cdot\ldots\cdot(1+t^{2^{l-1}})$$
whence, using the designation
$$t^{2^i}=x_{i+1},\; i=0,\ldots,l-1,$$
we get
$$W(t)=(1+x_1)^{m_1}\cdot\ldots\cdot(1+x_{l_n})^{m_n}.$$
Obviously, the number $|C_m(A)|$ is equal to the sum of
coefficients of the monomials
$$K(\lambda_1,\ldots,\lambda_{l_n})x_1^{\lambda_1}\cdot\ldots\cdot
x_{l_n}^{\lambda_{l_n}}
$$
with $l_n$ variables such that their indices $\lambda_1$,
$\lambda_2$, \ldots, $\lambda_{l_n}$ are the components of
solutions of the equation
$$\lambda_1+2\lambda_2+2^2\lambda_3+\ldots+2^{l_n-1}\lambda_{l_n}=m$$
and these indices satisfy the inequalities
$$\lambda_i\leqslant \overline{k_{2^{i-1}}}, \;i=1,\ldots,l_n,$$
where $(\overline{k_1},\ldots,\overline{k_{2^{l_n}-1}})$ is the
specification of the multiset $\overline{A}$ that is adjoint to
the multiset $A.$ Therefore,
$$|C_m(A)|=\sum_{\begin{subarray}\lambda_1+2\lambda_{2}+\ldots+2^{l_n-1}\lambda_{l_n}=m\\
\lambda_i\leqslant k_{2^{i-1}},
\; i=1,\ldots,l_n\end{subarray}}\prod_{i=1}^{l_n}\binom{\overline{k_{2^{i-1}}}}{\lambda_i}.$$

Thus the next theorem is valid.

\begin{theorem}\label{theo.step}
The number of $m$-submultisets of the multiset
$$A=\{a_1^{2^{l_1}-1},\ldots,a_n^{2^{l_n}-1}\}, \;l_1\leqslant
l_2\leqslant\ldots\leqslant l_n
$$
is equal to
\begin{equation}\label{|C^m(A)|.step}
|C^m(A)|=\sum_{\begin{subarray} \lambda_1+2\lambda_{2}+\ldots+2^{l_n-1}\lambda_{l_n}=m\\
\lambda_i\leqslant \overline{k_{2^{i-1}}}, \,i=1,\ldots,l_n\end{subarray}}\prod_{i=1}^{l_n}\binom{\overline{k_{2^{i-1}}}}{\lambda_i}.
\end{equation}
\end{theorem}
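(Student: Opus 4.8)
The plan is to compute the generatrix $W(t)=\prod_{j=1}^n\sum_{i=0}^{2^{l_j}-1}t^i$ in a fully factored form and then read off the coefficient of $t^m$, which by the definition of the generatrix in the previous section equals $|C_m(A)|$.

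First I would record the binary factorization identity
$$\sum_{i=0}^{2^l-1}t^i=(1+t)(1+t^2)(1+t^4)\cdots(1+t^{2^{l-1}})=\prod_{s=0}^{l-1}(1+t^{2^s}),$$
which follows at once by telescoping: multiplying the right-hand side by $(1-t)$ collapses successively to $1-t^{2^l}$, and dividing by $1-t$ returns the geometric sum on the left. Applying this to each factor of $W(t)$ gives $W(t)=\prod_{j=1}^n\prod_{s=0}^{l_j-1}(1+t^{2^s})$.

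Next I would regroup the factors according to the common value of the exponent $2^s$. For each $i\in\{1,\ldots,l_n\}$ the factor $(1+t^{2^{i-1}})$ occurs once in the inner product for every index $j$ with $l_j\geqslant i$, hence exactly $m_i:=|\{j:l_j\geqslant i\}|$ times in total. With the substitution $x_i=t^{2^{i-1}}$ this yields $W(t)=\prod_{i=1}^{l_n}(1+x_i)^{m_i}$. The one step demanding care is the identification of $m_i$ with an entry of the adjoint specification: since $k_j=2^{l_j}-1\geqslant 2^{i-1}$ is equivalent to $l_j\geqslant i$, definition (\ref{-k=k}) gives precisely $m_i=\overline{k_{2^{i-1}}}$.

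Finally I would expand each factor by the binomial theorem, $(1+x_i)^{m_i}=\sum_{\lambda_i=0}^{m_i}\binom{m_i}{\lambda_i}x_i^{\lambda_i}$, and multiply out. Re-substituting $x_i=t^{2^{i-1}}$, a monomial indexed by $(\lambda_1,\ldots,\lambda_{l_n})$ contributes $\prod_{i=1}^{l_n}\binom{m_i}{\lambda_i}$ to the coefficient of $t^{\sum_i 2^{i-1}\lambda_i}$. Collecting the terms whose total exponent equals $m$ produces exactly formula (\ref{|C^m(A)|.step}); the side conditions $\lambda_i\leqslant\overline{k_{2^{i-1}}}$ merely delimit the ranges in which the binomial coefficients are nonzero, so they restrict the sum to its nonvanishing summands without changing its value. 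There is no serious obstacle in this argument — it is a direct generating-function computation — and the only place requiring attention is the multiplicity count in the regrouping step together with its translation into the adjoint specification via (\ref{-k=k}).
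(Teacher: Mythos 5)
Your proposal is correct and follows essentially the same route as the paper: both arguments factor each geometric sum $1+t+\ldots+t^{2^{l}-1}$ into $\prod_{s=0}^{l-1}(1+t^{2^{s}})$, substitute $x_{i}=t^{2^{i-1}}$ to rewrite the generatrix as $\prod_{i}(1+x_{i})^{\overline{k_{2^{i-1}}}}$, and extract the coefficient of $t^{m}$ by the binomial theorem. Your explicit verification that the multiplicity $|\{j:l_j\geqslant i\}|$ equals $\overline{k_{2^{i-1}}}$ via (\ref{-k=k}) is a point the paper leaves implicit, but the argument is the same.
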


\begin{example}
Suppose we have the multiset
$A=\{a_1^3,a_2^7,a_3^{15},a_4^{31}\}.$ We seek the primary
specification of the adjoint multiset $\overline{A}:$
$$\overline{k}=(4,4,4,3,3,3,3,2,2,2,2,2,2,2,2,1,1,1,1,1,1,1,1,1,1,1,1,1,1,1,1).$$
Therefore,
$$\overline{k_1}=4, \;\overline{k_2}=4,
\;\overline{k_4}=3, \;\overline{k_8}=2,
\;\overline{k_{16}}=1.
$$
The equation
$$\lambda_1+2\lambda_2+4\lambda_3+8\lambda_4+16\lambda_5=21$$
have $60$ solutions. But only $13$ solutions satisfy the
inequalities
$$\lambda_1\leqslant\overline{k_1}, \,\lambda_2\leqslant\overline{k_2}, \,
\lambda_3\leqslant\overline{k_4},\,\lambda_4\leqslant\overline{k_8}, \,\lambda_5\leqslant\overline{k_{16}}.
$$
List of these solutions:
$$(3,3,3,0,0),\;(3,3,1,1,0),\;(3,1,2,1,0),\;(3,1,0,2,0),\;(3,1,0,0,1),$$
$$(1,4,3,0,0),\;(1,4,1,1,0),\;(1,2,2,1,0),\;(1,2,0,2,1),\;(1,2,0,0,1),$$
$$(1,0,3,1,0),\;(1,0,1,2,0),\;(1,0,1,0,1).$$
Thus, we have
$$|C^{21}(A)|=\binom{4}{3}\cdot\binom{4}{3}\cdot\binom{3}{3}+\binom{4}{3}\cdot\binom{4}{3}\cdot\binom{3}{1}\cdot\binom{2}{1}+
\binom{4}{3}\cdot\binom{4}{1}\cdot\binom{3}{2}\cdot\binom{2}{1}+$$$$\binom{4}{3}\cdot\binom{4}{1}\cdot\binom{2}{2}+
\binom{4}{3}\cdot\binom{4}{1}\cdot\binom{1}{1}+\binom{4}{1}\cdot\binom{4}{4}\cdot\binom{3}{3}+
\binom{4}{1}\cdot\binom{4}{4}\cdot\binom{3}{1}\cdot\binom{2}{1}+$$$$\binom{4}{1}\cdot\binom{4}{2}\cdot\binom{3}{2}\cdot\binom{2}{1}+
\binom{4}{1}\cdot\binom{4}{2}\cdot\binom{2}{2}\cdot\binom{1}{1}+\binom{4}{1}\cdot\binom{4}{2}\cdot\binom{1}{1}+
\binom{4}{1}\cdot\binom{3}{3}\cdot\binom{2}{1}+$$$$\binom{4}{1}\cdot\binom{3}{1}\cdot\binom{2}{2}+
\binom{4}{1}\cdot\binom{3}{1}\cdot\binom{1}{1}=492.$$
\end{example}

\section{Algorithm for calculation of $m$-submultisets of an arbitrary
multiset}

Let us construct a recursive algorithm for calculation of the
number of $m$-submultisets of the multiset
$$A=\{a_1^{k_1},a_2^{k_2},\ldots,a_n^{k_n}\}.$$

We use for the number $C_m(A)$ the notation from
\cite{Grin-Kleit}. Then we have
$$\prod_{i=1}^n(1+t+t^2+\ldots+t^{k_i})=\sum_{i=0}^r\binom{k_1,k_2,\ldots,k_n}{i}t^i,$$
where $r=\sum_{i=1}^nk_i.$ If the coefficients
$$A(i)=\binom{k_1,k_2,\ldots,k_{l-1}}{i},\;\; i=0,\ldots,s
$$
of the polynomial
$$\sum_{i=0}^sA(i)t^i=\prod_{i=1}^{l-1}(1+t+\ldots+t^{k_i}),\quad s=\sum_{i=1}^{l-1}k_i
$$
are known, then the coefficients
$$B(j)=\binom{k_1,k_2,\ldots,k_l}{j}, \;j=0,1,\ldots,s+k_l$$
of the polynomial
$$\sum_{j=0}^{s+k_l}B(j)t^j=(1+t+\ldots+t^{k_l})\cdot \sum_{i=0}^sA(i)t^i
$$
are obtained by summing $k_l+1$ last elements of the row
$$\underbrace{0\ldots 0}_{k_l}A(0)A(1)\ldots A(j).
$$
More exactly, we have
\begin{equation}\label{l-l+1}\binom{k_1,k_2,\ldots,k_l}{j}=\sum_{q=j-k_l}^j\binom{k_1,k_2,\ldots,k_{l-1}}{q},
\; j=0,\ldots,s+k_l,
\end{equation}
where
$$\binom{k_1,k_2,\ldots,k_{l-1}}{q}=0$$
if $q<0$ or $q>s.$

The calculation process is convenient to design in the form of
a generalized Pascal triangle.

If the first element of the multiset $A$ has multiplicity
$k_1,$ then the calculation is begun from the zero row of the
table
$$\underbrace{0\ldots 0}_{k_1}1\underbrace{0\ldots 0}_{k_1}.$$
The first row of this table is obtained with the help of the
zero row by using relation (\ref{l-l+1}). Then the first row
has the form
$$\underbrace{0\ldots 0}_{k_2}\underbrace{1\ldots
1}_{k_1+1}\underbrace{0\ldots 0}_{k_2},
$$
where $k_2$ is the multiplicity of the second element of
multiset $A.$

Continuing the calculation process to the $n$th line inclusive,
we obtain the required numbers
$$C_i(A), \;i=0,\ldots,r+1, \;r=\sum_{i=1}^nk_i.$$

\begin{example}
Suppose we have the multiset
$$A=\{a_1^4,a_2^3,a_3^3,a_4^1\};$$
then, using the above algorithm, we obtain the table
$$\arraycolsep=2pt
\begin{array}{cccccccccccccccc}
   &  &  &  & C_0(A)\mspace{-2mu} & C_1(A)\mspace{-2mu} & C_2(A)\mspace{-2mu} & C_3(A)\mspace{-2mu}
   & C_4(A)\mspace{-2mu} & C_5(A)\mspace{-2mu} & C_6(A)\mspace{-2mu} & C_7(A)\mspace{-2mu}
   & C_8(A)\mspace{-2mu} & C_9(A)\mspace{-2mu} & C_{10}(A)\mspace{-2mu} &C_{11}(A)\\
  0 & 0 & 0 & 0 & 1 & 0 & 0 & 0 & 0 &  &  &  &  &  &  &  \\
   & 0 & 0 & 0 & 1 & 1 & 1 & 1 & 1 & 0 & 0 & 0 &  &  &  &  \\
   & 0 & 0 & 0 & 1 & 2 & 3 & 4 & 4 & 3 & 2 & 1 & 0 & 0 & 0 &   \\
   &  &  & 0 & 1 & 3 & 6 & 10 & 13 & 14 & 13 & 10 & 6 & 3 & 1 & 0   \\
   &  &  &  & 1 & 4 & 9 & 16 & 23 & 27 & 27 & 23 & 16 & 9 & 4 & 1  \\
\end{array}
$$

The results are written in the last row of this table:
$C_0(A)=1$, $C_1(A)=4$, $C_2(A)=9$, $C_3(A)=16$, \ldots{} At
the same time the equality
$$\sum_{i=0}^{11}C_i(A)=(4+1)(3+1)(3+1)(1+1)=160$$
is fulfilled.
\end{example}

The Pascal triangle for the set
$A=\{a_1^1,a_2^1,a_3^1,a_4^1\}$, according to the above
algorithm, has the form
$$
\begin{array}{cccccc}
    & C_0(A) & C_1(A) & C_2(A) & C_3(A) & C_4(A) \\
  0 & 1 & 0 & & &    \\
    0 & 1 & 1 & 0 &  &  \\
    0 & 1 & 2 & 1 & 0&   \\
 0 & 1 & 3 & 3 & 1 & 0  \\
   & 1 & 4 & 6 & 4 & 1   \\
\end{array}
$$
In addition, we have the relation
$$\sum_{i=0}^{4}C_i(A)=(1+1)(1+1)(1+1)(1+1)=16.$$

\begin{remark}
If the multiplicities of the multiset elements are large, then
it is convenient to use the relations
$$\binom{k_1,k_2,\ldots,k_l}{j}=\binom{k_1,k_2,\ldots,k_l}{j-1}+\binom{k_1,k_2,\ldots,k_{l-1}}{j}-
\binom{k_1,k_2,\ldots,k_{l-1}}{j-k_l-1},
$$
$$
j=0,\ldots,\sum_{i=1}^lk_i,
$$
which follow from relations (\ref{l-l+1}). This can
significantly reduce the number of operations.
\end{remark}

\begin{remark}
Since any $k$-submultiset $B$ of the multiset $A$ uniquely
corresponds to $(|A|-k)$-submultiset $A-B$ of this multiset, we
have
$$
\binom{k_1,k_2,\ldots,k_l}{j}=\binom{k_1,k_2,\ldots,k_l}{s-j}, \;j=0,\ldots,s,
$$
where $s=\sum_{i=1}^lk_i,$ i. e., the numbers that are
equidistant from the ends of each row of the table are equal to
each other. Thus, if $m>\lfloor\frac{s}{2}\rfloor,$ then
instead of calculating  $|C_m(A)|$ it is more convenient to
calculate $|C_{s-m}(A)|.$
\end{remark}

\section{$m$-permutations of the multiset elements}

\begin{definition}\label{def.compoz}
{\sloppy The set of all ordered $m$-samples of elements of the
multiset $A=\{a_1^{k_1},\ldots,a_n^{k_n}\}$ is called the set
of $m$-permutations on this multiset. By $P_m(A)$ we denote
this set.

}
\end{definition}

The following statement is well known.

\begin{proposition}
\label{stat.polinom.formula} The number of all permutations of
elements of the multiset $A$ is equal to
\begin{equation*}\label{polinom.formula}
    |P_{|A|}(A)|=\frac{(k_{1}+k_{2}+
\ldots+k_{n})!}{k_{1}!k_{2}!\cdot\ldots\cdot k_{n}!}.
\end{equation*}
\end{proposition}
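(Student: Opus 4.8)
The plan is to prove this by the classical overcounting (label-and-quotient) argument, which fits the combinatorial spirit of the paper. Write $N = k_1 + k_2 + \cdots + k_n = |A|$. First I would temporarily distinguish the identical copies of each symbol $a_i$ by attaching labels $1, 2, \ldots, k_i$, turning $A$ into a genuine set of $N$ pairwise distinct objects. The number of linear orderings of these $N$ distinct objects is then exactly $N!$.

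Next I would consider the forgetful map $\pi$ that erases the labels, sending each labeled ordering to the corresponding element of $P_{|A|}(A)$. This map is clearly surjective, since every permutation of the multiset lifts to at least one labeled ordering. The heart of the argument is to show that every fiber $\pi^{-1}(\sigma)$ has exactly $k_1! k_2! \cdots k_n!$ elements. To see this, I would fix a multiset permutation $\sigma$ and observe that two labeled orderings map to $\sigma$ precisely when they differ only by a permutation of the labels within each symbol class. Permuting the $k_i$ labels attached to $a_i$ can be done in $k_i!$ ways, independently across the $n$ classes, so by the combinatorial rule of the product each fiber contains exactly $k_1! \cdots k_n!$ labeled orderings. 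Since these fibers partition the set of $N!$ labeled orderings into blocks of equal size $k_1! \cdots k_n!$, the number of blocks — which is precisely $|P_{|A|}(A)|$ — equals $N! / (k_1! k_2! \cdots k_n!)$, as claimed.

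The main obstacle, and the step demanding care, is the verification that all fibers have the same cardinality: one must check that the relabeling action of $S_{k_1} \times \cdots \times S_{k_n}$ on labeled orderings is \emph{free} (no nontrivial relabeling fixes a labeled ordering, because the copies of each $a_i$ occupy distinct positions) and that its orbits coincide exactly with the fibers of $\pi$. Freeness is what guarantees each orbit attains the full size $k_1! \cdots k_n!$ rather than a proper divisor of it. As an alternative that avoids orbit language entirely, I could instead count directly by choosing positions: select the $k_1$ positions for $a_1$ among the $N$ slots, then $k_2$ of the remaining slots for $a_2$, and so on, obtaining $\binom{N}{k_1}\binom{N-k_1}{k_2}\cdots\binom{k_n}{k_n}$, a telescoping product of binomial coefficients that collapses to the same fraction and yields the formula without any appeal to group actions.
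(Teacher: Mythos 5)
Your proof is correct. The paper offers no proof of this proposition at all --- it is stated as ``well known'' --- so there is nothing to compare against; both your label-and-quotient argument (with the correct attention to the freeness of the relabeling action, which guarantees all fibers have full size $k_1!\cdots k_n!$) and your alternative telescoping product $\binom{N}{k_1}\binom{N-k_1}{k_2}\cdots\binom{k_n}{k_n}$ are standard, complete derivations of the multinomial formula.
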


To determine the number of all $m$-permutations of the multiset
$A=\{a_1^{k_1},\ldots,a_n^{k_n}\}$ we use the theorem (see
Theorem \ref{theo.C_m(A)} on page \pageref{theo.C_m(A)}) about
the number of all $m$-submultisets ($m$-combinations) of this
multiset.

In this theorem it was found that the number of all
$m$-combinations of the multiset $A$ is equal to
$$|C_m(A)|=\sum_{\lambda\in
\Lambda_m(A)}|C^{\lambda}_m(A)|,
$$
where
$$C^{\lambda}_m(A)=\{B\in
C_m(A):B^{''}=\{\lambda_1,\ldots,\lambda_n\}\}.
$$
Obviously,
\begin{equation}\label{|P^m(A)|}
|P_m(A)|=\sum_{\lambda\in \Lambda_m(A)}|P_{|B|}(B)|\cdot |C^{\lambda}_m(A)|
\end{equation}
but
$$|P_{|B|}(B)|=\frac{m!}{1!^{\lambda_1}2!^{\lambda_2}\cdot\ldots\cdot
s!^{\lambda_s}}
$$
whence equality (\ref{|P^m(A)|}) leads to the following
theorem.

\begin{theorem}\label{theo.|P^m(A)|}
The number of all $m$-permutations of the multiset
$A=\{a_1^{k_1},\ldots,a_n^{k_n}\}$ is equal to
\begin{equation}\label{|P^m(A)|!}
|P_m(A)|=\sum_{\lambda\in \Lambda_m(A)}\frac{m!}{1!^{\lambda_1}2!^{\lambda_2}\cdot\ldots\cdot
s!^{\lambda_s}}\prod_{j=1}^s\binom{\overline{k_j}-\sum_{i=j+1}^s\lambda_i}{\lambda_j},
\end{equation}
where $\Lambda_m(A)$ is the set of those solutions of the
equation
\begin{equation}\label{sumilambda=mper}
\sum_{i=1}^si\lambda_i=m
\end{equation}
that satisfy the inequalities
\begin{equation}\label{sumilambda<kjper}
\sum_{i=j}^s\lambda_i\leqslant \overline{k_j};\; j=1,\ldots,s,
\end{equation}
$$s=\min(m,r),\; r=\max\{k_i\},\; i=1,\ldots,n,$$
$\overline{k_j}$ is the $j$th element of specification
(\ref{-k=k}), which is adjoint to the primary specification of
the multiset $A.$
\end{theorem}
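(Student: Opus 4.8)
The plan is to carry out a two-stage fibration of the set $P_m(A)$ and then import the counting already performed in Theorem \ref{theo.C_m(A)}. First I would observe that every ordered $m$-sample determines, by forgetting the order of its entries, a unique $m$-submultiset $B\in C_m(A)$; conversely each $B\in C_m(A)$ arises in this way, and the fibre over $B$ is exactly the set of all permutations of $B$. Hence the fibres partition $P_m(A)$, giving $|P_m(A)|=\sum_{B\in C_m(A)}|P_{|B|}(B)|$.

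Next I would regroup this sum according to the secondary specification $\lambda=B''$, using the partition of $C_m(A)$ into the classes $C^{\lambda}_m(A)$ from (\ref{C_lambda^m(A)}) established in the proof of Theorem \ref{theo.C_m(A)}. The key observation is that $|P_{|B|}(B)|$ depends only on the multiset of multiplicities of $B$, hence only on $\lambda$, and is therefore constant on each class $C^{\lambda}_m(A)$. Summing the fibre sizes class by class then yields exactly equation (\ref{|P^m(A)|}), namely $|P_m(A)|=\sum_{\lambda\in\Lambda_m(A)}|P_{|B|}(B)|\cdot|C^{\lambda}_m(A)|$.

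It then remains to evaluate the two factors. For a representative $B$ with secondary specification $\lambda=\{\lambda_1,\ldots,\lambda_s\}$ — that is, $\lambda_j$ distinct base elements each of multiplicity $j$, so that $\sum_{j}j\lambda_j=m$ — Proposition \ref{stat.polinom.formula} gives $|P_{|B|}(B)|=m!/(1!^{\lambda_1}2!^{\lambda_2}\cdot\ldots\cdot s!^{\lambda_s})$, since the denominator collects one factor $j!$ for each of the $\lambda_j$ elements of multiplicity $j$. The second factor $|C^{\lambda}_m(A)|$ is supplied verbatim by (\ref{|C_lambda^m(A)|}) as $\prod_{j=1}^s\binom{\overline{k_j}-\sum_{i=j+1}^s\lambda_i}{\lambda_j}$, while the description of the index set $\Lambda_m(A)$ through equation (\ref{sumilambda=mper}) and inequalities (\ref{sumilambda<kjper}) is inherited unchanged from Theorem \ref{theo.C_m(A)}. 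Substituting both expressions into (\ref{|P^m(A)|}) produces formula (\ref{|P^m(A)|!}).

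I do not expect a genuine obstacle here, since the heavy combinatorial work — counting the submultisets in each class $C^{\lambda}_m(A)$ — was already carried out in Theorem \ref{theo.C_m(A)}. The only point requiring care is the justification that $|P_{|B|}(B)|$ is constant across each class $C^{\lambda}_m(A)$; this is immediate from the polynomial formula, but it is precisely what legitimises the regrouping in the second stage and allows the permutation factor to be pulled out of the inner sum.
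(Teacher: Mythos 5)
Your proposal is correct and follows essentially the same route as the paper: fibre $P_m(A)$ over $C_m(A)$, regroup by secondary specification into the classes $C^{\lambda}_m(A)$, pull out the constant permutation factor $m!/(1!^{\lambda_1}\cdots s!^{\lambda_s})$ via the polynomial formula, and substitute $|C^{\lambda}_m(A)|$ from the proof of Theorem \ref{theo.C_m(A)}. The only difference is that you make explicit the intermediate identity $|P_m(A)|=\sum_{B\in C_m(A)}|P_{|B|}(B)|$ and the constancy of the fibre size on each class, which the paper passes over with ``Obviously''.
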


The number of solutions of the equation
$\sum_{i=1}^si\lambda_i=m$ increases with increasing $m$ and
$s$. For example, already at $m=s=20$ this equation has 627
solutions. Consequently formula (\ref{|P^m(A)|!}) is not always
convenient for practical use because it requires large amounts
of computation.

We construct an algorithm for calculating $m$-permutations of
elements of the multiset $A=\{a_1^{k_1},\ldots,a_n^{k_n}\}$
such that in many cases this algorithm eliminates these
shortcomings.

Let
$$|P_m(A)|=\left[\begin{array}{c}
                         k_1 k_2 \cdots  k_n \\
                          m
                       \end{array}
\right]\!.
$$

In particular, if $k_1=k_2=\ldots=k_n=1,$ then the multiset
coincides with its basis and this multiset is an ordinary set,
i. e.,
$$\left[\begin{array}{c}
          \underbrace{1\, 1 \ldots\, 1}_{n} \\
          m
        \end{array}
\right]=\frac{n!}{(n-m)!}, \; 0\leqslant m\leqslant n.$$

If $k_1=n$, $k_2=k_3=\ldots=k_n=0,$ then
$$\left[\begin{array}{c}
                                                     n \\
                                                     m
                                                   \end{array}
\right]=1, \;0\leqslant m\leqslant n.
$$

In the case, where the multiset $A$ has specification
(\ref{specyf_infty}) (see page \pageref{specyf_infty}), we have
the obvious equality
$$\left[\begin{array}{c}
          \underbrace{\infty\, \infty\, \cdots\, \infty}_{n} \\
          m
        \end{array}
\right]=n^m.$$

\begin{theorem}\label{theo.algor.permut}
For any $r=2,3,\ldots,n$ the equality
$$\left[\begin{array}{c}
                                               k_1\,k_2\,\ldots\,k_r \\
                                               i
                                             \end{array}
\right]=$$\begin{equation}\label{eq.algor.permut}
=\begin{cases}\sum_{j=0}^{\min(i,k_1+\ldots+k_{r-1})}\binom{i}{j}\left[\begin{array}{c}
                                               k_1\,k_2\,\ldots\,k_{r-1} \\
                                               i
                                             \end{array}
\right]\!,\; i\leqslant k_r,\\
\sum_{j=i-k_r}^{\min(i,k_1+\ldots+k_{r-1})}\binom{i}{j}\left[\begin{array}{c}
                                               k_1\,k_2\,\ldots\,k_{r-1} \\
                                               i
                                             \end{array}
\right]\!,\; k_r<i\leqslant k_1+\ldots+k_r \end{cases}
\end{equation}
is fulfilled, where $0\leqslant i\leqslant |A|.$
\end{theorem}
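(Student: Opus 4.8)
The plan is to prove the recursion by a direct combinatorial decomposition of the set $P_i(A_r)$, classifying each $i$-permutation according to how many times the last symbol $a_r$ occurs. Write $A_r=\{a_1^{k_1},\ldots,a_r^{k_r}\}$ and $A_{r-1}=\{a_1^{k_1},\ldots,a_{r-1}^{k_{r-1}}\}$, and set $K=k_1+\ldots+k_{r-1}$. An element of $P_i(A_r)$ is an ordered word of length $i$ in which each $a_\ell$ appears at most $k_\ell$ times; in particular $a_r$ may occur at most $k_r$ times. First I would partition $P_i(A_r)$ according to the number $t$ of entries equal to $a_r$, where $t$ ranges over $0\leqslant t\leqslant\min(i,k_r)$.

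For a fixed value of $t$, I would reconstruct such a word from two independent pieces of data: the choice of which $t$ of the $i$ positions are occupied by $a_r$, of which there are $\binom{i}{t}$, and the word of length $i-t$ obtained by reading the remaining $i-t$ entries in order, which is an arbitrary member of $P_{i-t}(A_{r-1})$. This correspondence is a bijection: the chosen position set and the residual word together determine the original word uniquely, and conversely every such pair produces a legitimate $i$-permutation of $A_r$, because the multiplicity bounds on $a_1,\ldots,a_{r-1}$ are exactly those imposed by membership in $P_{i-t}(A_{r-1})$. Hence the block indexed by $t$ has cardinality $\binom{i}{t}\,|P_{i-t}(A_{r-1})|$, and summing over $t$ gives
$$|P_i(A_r)|=\sum_t\binom{i}{t}\,|P_{i-t}(A_{r-1})|.$$

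To reach the stated form I would substitute $j=i-t$, so that $\binom{i}{t}=\binom{i}{j}$ and $|P_{i-t}(A_{r-1})|$ is precisely the bracket with lower index $j$. The step that needs care is converting the admissible range of $t$ into the piecewise limits on $j$. The constraints $0\leqslant t\leqslant k_r$ and $0\leqslant i-t\leqslant K$ become $i-k_r\leqslant j\leqslant i$ together with $0\leqslant j\leqslant K$, that is $\max(0,i-k_r)\leqslant j\leqslant\min(i,K)$. When $i\leqslant k_r$ the lower bound equals $0$, giving the first case of (\ref{eq.algor.permut}); when $k_r<i$ it equals $i-k_r$, giving the second. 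The upper cutoff $\min(i,K)$ is automatic, since $P_j(A_{r-1})$ is empty once $j>K$ and the bracket is $0$ there, so extending the sum past $K$ only adds vanishing terms.

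I expect the sole genuine obstacle to be this bookkeeping of indices: checking that the two-sided bound on $t$ matches the split summation limits, and confirming that the boundary conventions for the bracket (its value being $0$ when the lower index lies outside $[0,K]$) leave no spurious contributions. The bijection underlying the recursion is transparent, so once the index ranges are pinned down the identity follows at once from the sum and product rules.
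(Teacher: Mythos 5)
Your proof is correct, but it takes a genuinely different route from the paper. The paper derives the recurrence by manipulating exponential generating functions: it multiplies the generatrix $\sum_{j}\bigl[\begin{smallmatrix}k_1\ldots k_{r-1}\\ j\end{smallmatrix}\bigr]\frac{t^j}{j!}$ of the first $r-1$ letters by the factor $\sum_{s=0}^{k_r}\frac{t^s}{s!}$ for $a_r$ and compares coefficients of $\frac{t^i}{i!}$, which yields $\sum_{j+s=i}\binom{i}{j}\bigl[\begin{smallmatrix}k_1\ldots k_{r-1}\\ j\end{smallmatrix}\bigr]$ before the same index bookkeeping you perform. You instead give the direct combinatorial content of that coefficient identity: partition the $i$-permutations of $A_r$ by the number $t$ of occurrences of $a_r$, choose the $t$ positions in $\binom{i}{t}=\binom{i}{i-t}$ ways, and read off the residual word as an element of $P_{i-t}(A_{r-1})$; your bijection and the conversion of the two-sided bound on $t$ into the piecewise limits on $j=i-t$ are both handled correctly, including the observation that terms with $j>k_1+\ldots+k_{r-1}$ vanish. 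Your argument is more elementary and makes the role of $\binom{i}{j}$ transparent, whereas the paper's derivation is shorter given that the generatrix for $m$-permutations has already been introduced and fits the generating-function framework of the surrounding sections. Incidentally, you have correctly read the lower index of the bracket inside the sums as $j$ rather than the $i$ printed in the theorem statement; the paper's own proof confirms that this is a typographical slip in the displayed formula.
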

\begin{proof}
The generatrix for the number of permutations
$$\left[\begin{array}{c}
k_1\,k_2\,\ldots\,k_r \\
i
\end{array}
\right]
$$
of elements of the multiset $A=\{a_1^{k_1},\ldots,a_n^{k_n}\}$
has the form
$$\prod_{i=1}^n\sum_{j=0}^{k_i}\frac{t^j}{j!}=\sum_{i=0}^{k_1+\ldots+k_n}\left[\begin{array}{c}
k_1\,k_2\,\ldots\,k_n \\
i
\end{array}
\right]\frac{t^i}{i!}.
$$
Hence,
$$\left(\sum_{j=0}^{k_1+\ldots+k_{r-1}}\left[\begin{array}{c}
k_1\,k_2\,\ldots\,k_{r-1} \\
j
\end{array}
\right]\frac{t^j}{j!}\right)\cdot\sum_{s=0}^{k_r}\frac{t^s}{s!}=\sum_{i=0}^{k_1+\ldots+k_r}\left[\begin{array}{c}
k_1\,k_2\,\ldots\,k_r \\
i
\end{array}
\right]\frac{t^i}{i!}.
$$
Since
$$\left(\sum_{j=0}^{k_1+\ldots+k_{r-1}}\left[\begin{array}{c}
k_1\,k_2\,\ldots\,k_{r-1} \\
j
\end{array}
\right]\frac{t^j}{j!}\right)\sum_{s=0}^{k_r}\frac{t^s}{s!}=$$
$$=\sum_{i=0}^{k_1+\ldots+k_r}\left(\sum_{j+s=i}\left[\begin{array}{c}
k_1\,k_2\,\ldots\,k_{r-1} \\
j
\end{array}
\right]\frac{t^i}{j!s!}\right)\!,
$$
we have
$$\left[\begin{array}{c}
k_1\,k_2\,\ldots\,k_r \\
i
\end{array}
\right]=\sum_{j+s=i}\frac{i!}{j!s!}\left[\begin{array}{c}
k_1\,k_2\,\ldots\,k_{r-1} \\
j
\end{array}
\right]=\sum_{j+s=i}\binom{i}{j}\left[\begin{array}{c}
k_1\,k_2\,\ldots\,k_{r-1} \\
j
\end{array}
\right]\!.
$$

For both expressions $\binom{i}{j}$ and $\left[\begin{array}{c}
k_1,\ldots,k_{r-1} \\
j
\end{array}
\right]$ in the last sum to have meaning, it is necessary to
have the inequalities $j\leqslant i$ and $j\leqslant
k_1+\ldots+k_{r-1}$, i. e., the inequality $j\leqslant
\min(i,k_1+\ldots+k_{r-1})$ is valid. If $i\leqslant k_r,$ then
from the inequality $0\leqslant s\leqslant k_r$ it follows that
the smallest value of the index $j$ under the restriction
$j+s=i$ is $j=0.$ If $i>k_r,$ then the smallest value of the
index $j$ is $j=i-k_r.$ This completes the proof.
\end{proof}

Recurrence equality (\ref{eq.algor.permut}) can be used to
calculate the number of all $m$-permutations of the multiset
$A=\{a_1^{k_1},\ldots,a_n^{k_n}\},$ where $m=0,\ldots,|A|.$

For this purpose

1. Write the row of $k_1+1$ ones, which are numbers of
$i$-permutations $\left[\begin{array}{c}
k_1 \\
i
\end{array}
\right]$ on the multiset $A=\{a_1^{k_1}\}, i=0,\ldots,k_1.$
This row is called the basic row.

2. Under the basic row we construct a table with $k_1+1$
columns and $k_1+k_2+1$ rows. We number rows of the table from
top to bottom by numbers from $0$ to $k_1+k_2.$

3. In the $i$th row of the table we write the first $k_1+1$
elements of the $i$th row of the Pascal triangle. If the $i$th
row of the Pascal triangle contains the less than $k_1+1$
elements, then we add the required number of zeros.

4. In the lower left corner of the table we replace the written
numbers by zeros so that the zeros form a right isosceles
triangle with the leg $k_1.$

5. We calculate the sum of the products of elements for the
$i$th $(i=0,\ldots,k_1+k_2)$ row of the table and the
corresponding elements of the basic row. The resulting number
of permutations
$$\left[\begin{array}{c}
k_1,k_2 \\
i
\end{array}
\right]\!, \;i=0,\ldots,k_1+k_2
$$
is added to the $i$th row on the right.

6. If the number of rows of the last table is greater than the
cardinality of the multiset, then the calculation is completed
and the result of the algorithm is the column of numbers such
that these numbers were added to the table on the right.
Otherwise, we transpose the column of numbers that were added
to the table on the right, consider this as the base row of the
new table, the parameters of the table are increased by the
value of the multiplicity of the next element of the multiset,
and then we go to item 2.

Thus, if the multiset $A$ has the cardinality basis $n,$ then
the execution of the algorithm requires the construction of the
$(n-1)$th table.

\begin{example}
Find the number of all $m$-permutations of the multiset
$A=\{a_1^2,a_2^4,a_3^5\}, \,m=0,1,\ldots,11.$

For this purpose we build the following tables:
$$\begin{array}{ccccccc}
     &  & 1 & 1 & 1 &  &  \\
    \hline
    0 & \vline & 1 & 0 & 0 & \vline & 1 \\
    1 & \vline & 1 & 1 & 0 & \vline & 2 \\
    2 & \vline & 1 & 2 & 1 & \vline & 4 \\
    3 & \vline & 1 & 3 & 3 & \vline & 7 \\
    4 & \vline & 1 & 4 & 6 & \vline & 11 \\
    5 & \vline & 0 & 5 & 10 & \vline & 15 \\
    6 & \vline & 0 & 0 & 15 & \vline & 15
  \end{array}
$$

$$\begin{array}{ccccccccccc}
     &  & 1 & 2 & 4 & 7 & 11 & 15 & 15 &  &  \\
\hline
    0 & \vline & 1 & 0 & 0 & 0 & 0 & 0 & 0 & \vline & 1 \\
    1 & \vline & 1 & 1 & 0 & 0 & 0 & 0 & 0 & \vline & 3 \\
    2 & \vline & 1 & 2 & 1 & 0 & 0 & 0 & 0 & \vline & 9 \\
    3 & \vline & 1 & 3 & 3 & 1 & 0 & 0 & 0 & \vline & 26 \\
    4 & \vline & 1 & 4 & 6 & 4 & 1 & 0 & 0 & \vline & 72 \\
    5 & \vline & 1 & 5 & 10 & 10 & 5 & 1 & 0 & \vline & 191 \\
    6 & \vline & 0 & 6 & 15 & 20 & 15 & 6 & 1 & \vline & 482 \\
    7 & \vline & 0 & 0 & 21 & 35 & 35 & 21 & 7 & \vline & 1134 \\
    8 & \vline & 0 & 0 & 0 & 56 & 70 & 56 & 28 & \vline & 2422 \\
    9 & \vline & 0 & 0 & 0 & 0 & 126 & 126 & 34 & \vline & 4536 \\
    10 & \vline & 0 & 0 & 0 & 0 & 0 & 252 & 210 & \vline & 6930 \\
    11 & \vline & 0 & 0 & 0 & 0 & 0 & 0 & 462 & \vline & 6930
  \end{array}
$$
Therefore,
$$P^0(A)=1,\, P^1(A)=3,\, P^2(A)=9,\, P^3(A)=26,\,
P^4(A)=72,\, P^5(A)=191,$$
$$ P^6(A)=482,\, P^7(A)=1134,\,
P^8(A)=2422,\, P^9(A)=4536,\, P^{10}(A)=6930,$$
$$ P^{11}(A)=6930.$$
\end{example}

This algorithm is effective for multisets of relatively large
cardinality but with a small base. For example, to calculate
the number of $20$-permutations on the multiset
$A=\{a_1^3,a_2^9,a_3^{13}\}$ this algorithm requires the
construction of two tables of sizes $4\times 13$ and $13\times
26$ accordingly and the calculation by the formula requires the
analysis of the set of 627 solutions of equation
(\ref{sumilambda=mper}) and significant calculations.


\begin{example}
For the multiset
$$A=\{a_1^1,a_2^1,a_3^1,a_4^2,a_5^2,a_6^2,a_7^2,a_8^3,a_9^3,a_{10}^5\}$$
we have
$$P^0(A)=1,\, P^1(A)=10,\, P^2(A)=97,\, P^3(A)=912,\, P^4(A)=8299,$$
$$ P^5(A)=72946,\, P^6(A)=617874,\, P^7(A)=5029948,\, P^8(A)=39237380,$$
$$ P^9(A)=292327224,\, P^{10}(A)=2072330400,\, P^{11}(A)=13920355680,$$
$$ P^{12}(A)=88179787080,\, P^{13}(A)=523856052720,\, P^{14}(A)=2899520704080,$$
$$ P^{15}(A)=14831963546400,\, P^{16}(A)=6938695764000,$$
$$ P^{17}(A)=292608485769600,\, P^{18}(A)=1088829613872000,$$
$$ P^{19}(A)=3456466684070400,\, P^{20}(A)=8834757003072000, $$
$$P^{21}(A)=162615846032640000,\, P^{22}(A)=162615846032640000.$$
\end{example}


\end{document}